\newtheorem{thm}{Theorem}[section]
\newtheorem{cor}[thm]{Corollary}
\newtheorem{lem}[thm]{Lemma}
\newtheorem{prop}[thm]{Proposition}
\theoremstyle{definition}
\newtheorem{defin}[thm]{Definition}
\newtheorem{rem}[thm]{Remark}
\newtheorem{exa}[thm]{Example}
\numberwithin{equation}{section}
\newcommand{\ud}{\mathrm{d}}
\newcommand{\gP}{\mathbb{P}}
\newcommand{\gR}{\mathbb{R}}
\newcommand{\gN}{\mathbb{N}}
\newcommand{\gZ}{\mathbb{Z}}
\newcommand{\gE}{\mathbb{E}}
\newcommand{\bal}{\begin{align*}}
\newcommand{\eal}{\end{align*}}
\newcommand{\ball}{\begin{align}}
\newcommand{\eall}{\end{align}}
\definecolor{myGrey1}{gray}{0.2}
\definecolor{myGrey2}{gray}{0.4}
\definecolor{myGrey3}{gray}{0.6}
\definecolor{myGrey4}{gray}{0.8}
\begin{document}
\title{$\alpha$-stable random walk has massive thorns}
\author[A. BENDIKOV]{ALEXANDER BENDIKOV}
\thanks{Research of A. Bendikov was supported by National Science Centre, Poland, Grant DEC-2012/05/B/ST1/00613 and by SFB 701 of
German Research Council}
\author[W. CYGAN]{WOJCIECH CYGAN}
\thanks{Research of W. Cygan was supported by: National Science Centre, Poland, Grant DEC-2012/05/B/ST1/00613 and DEC-2013/11/N/ST1/03605; German Academic Exchange
Service (DAAD); SFB 701 of German Research Council}
\address{A. Bendikov \\
Institute of Mathematics\\
Wrocław University\\
50-384 Wrocław, Pl. Grunwaldzki 2/4, Poland}
\email{bendikov@math.uni.wroc.pl}
\address{W. Cygan \\
Institute of Mathematics\\
Wrocław University\\
50-384 Wrocław, Pl. Grunwaldzki 2/4, Poland}
\email{cygan@math.uni.wroc.pl}
\date{}

\begin{abstract}
We introduce and study a class of random walks defined on the integer
lattice $\mathbb{Z} ^d$ -- a discrete space and time counterpart of the
symmetric $\alpha$-stable process in $\mathbb{R} ^d$. When $0< \alpha <2$
any coordinate axis in $\mathbb{Z} ^d$, $d\geq 3$, is a non-massive set
whereas any cone is massive. We provide a necessary and sufficient condition
for the thorn to be a massive set.
\end{abstract}

\maketitle

\baselineskip=17pt


\renewcommand{\thefootnote}{}

\footnote{%
2010 \emph{\ Mathematics Subject Classification}: 31A15, 60J45, 05C81.}

\footnote{\emph{Key words and phrases}: capacity, Green function, Lévy
process, random walk, regular variation, subordination.}

\renewcommand{\thefootnote}{\arabic{footnote}} \setcounter{footnote}{0}



\section{Introduction}


\subsection*{Motivating questions}

This paper is motivated by the following two closely related questions.

1. Assuming that the probability $\phi $ on the group $\mathbb{Z} ^{d}$ is
symmetric and its support generates the whole $\mathbb{Z} ^{d}$, what is the
possible decay of the Green function 
\begin{equation*}
G(x)=\sum\limits_{n\geq 0}\phi ^{(n)}(x)
\end{equation*}%
as $x$ tend to infinity?

2. If $\phi $ is as above, which sets are massive/recurrent with respect to the random
walk driven by $\phi $?

Recall that the answer to the first question is known when $\phi $ is
symmetric, has finite second moment and $d\geq 3.$ Indeed, it is proved in 
\textsc{Spitzer} \cite{Spitzer} (see also \textsc{Saloff-Coste and Hebisch} 
\cite{Hebisch} for the treatment of general finitely generated groups) that $%
G(x)\sim c(\phi )\left\Vert x\right\Vert ^{2-d}$ at infinity. When the
second moment of $\phi $ is infinite but $\phi $ belongs to the domain of
attraction of the $\alpha$-stable law with $d/2<\alpha <\min \{d, 2\}$, $%
G(x)\sim c(\phi)\Vert x\Vert ^{\alpha -d}l(\Vert x\Vert)$ at infinity, where 
$l$ is an appropriately chosen slowly varying function, see \textsc{%
Williamson} \cite{Williamson}. However, there are many symmetric
probabilities $\phi $ for which the behaviour of the Green function $G$ at
infinity is not known.

In the present paper we use discrete subordination, a natural technique
developed in \textsc{Bendikov and Saloff-Coste} \cite{Lscbendikov} that
produces interesting examples of probabilities $\phi $ for which one can
estimate the behaviour of the Green function $G$ at infinity. This in turn
allows us to describe massiveness of some interesting classes of infinite
sets. For instance, we give necessary and sufficient conditions for the
thorn to be a massive set, see Section 4. Massiveness of thorns for the
simple random walk in $\mathbb{Z} ^d$, $d\geq 4$, was studied in the
celebrated paper of \textsc{Itô and McKean} \cite{McKean}.

The main idea behind this technique is the well-known idea of subordination
in the context of continuous time Markov semigroups but the applications we
have in mind require some adjustments and variations. The results we obtain
shed some light on the questions formulated above. The present paper is
concerned with examples when $\phi $ has neither finite support nor finite
second moment.

\subsection*{Subordinated random walks}

In the case of continuous time Markov processes, subordination is a
well-known and useful procedure of obtaining new process from an original
process. The new process may differ very much from the original process, but
the properties of this new process can be understood in terms of the
original process. The best known application of this concept is obtaining
the symmetric stable process from the Brownian motion. See e.g. \textsc{%
Bendikov} \cite{Bendikov}.

From a probabilistic point of view, a new process $(Y_{t})_{t>0}$ is
obtained from the original process $(X_{t})_{t>0}$ by setting $%
Y_{t}=X_{\varsigma _{t}}$, where the "subordinator" $(\varsigma _{s})_{s>0}$
is a nondecreasing Lévy process taking values in $(0,\infty )$ and
independent of $(X_{t})_{t>0}.$ See e.g. \textsc{Feller} \cite[Section X.7]%
{Feller}.

From an analytical point of view, the transition function $h_{\varsigma
}(t,x,B)$ of the new process is obtained as a time average of the transition
function $h(t,x,B)$ of the original process, that is,%
\begin{equation*}
h_{\varsigma }(t,x,B)=\int\limits_{0}^{\infty }h(s,x,B)\mathrm{d} \mu
_{t}(s).
\end{equation*}%
In this formula $\mu _{t}(s)$ is the distribution of the random
variable $\varsigma _{t}$. Subordination was first introduced by Bochner in
the context of semigroup theory. See \cite[footnote, p. 347]{Feller}.

Ignoring technical details, the minus infinitesimal generator $\mathcal{B}$
of the process $(Y_{t})_{t>0}$ is a function of the minus infinitesimal
generator $\mathcal{A}$ of the process $(X_{t})_{t>0}$, that is, $\mathcal{%
B=\psi (A)}.$ See \textsc{Jacob} \cite[Chapters 3 \& 4]{Jacob} for a
detailed discussion.

A discrete time version of subordination in which the functional calculus
equation $\mathcal{B=\psi (A)}$ serves as the defining starting point has
been considered by Bendikov and Saloff-Coste in \cite{Lscbendikov}. Given a
probability $\phi $ on $\mathbb{Z} ^{d}$ consider the random walk $%
X=\{X(n)\}_{n\geq 0}$ driven by $\phi $. In its simplest form, discrete
subordination is the consideration of a probability $\Phi $ defined as a
convex linear combination of the convolution powers $\phi ^{(n)}$. That is, 
\begin{gather*}
\Phi =\sum _{n\geq 1}c_n\, \phi ^{(n)},
\end{gather*}
where\ $c_n\geq 0$\ and $\sum _{n\geq 1}c_n =1$. We easily find that 
\begin{gather*}
\Phi ^{(n)}=\sum _{k\geq n}\Big( \sum _{k_1+\ldots +k_n=k}\, \, \prod
_{i=1}^{n}c_{k_i} \Big)\phi ^{(k)}.
\end{gather*}
The probabilistic interpretation is as follows: let $(R_i)$ be a sequence of
i.i.d. integer valued random variables, which are independent of $X$ and
such that $\mathbb{P} (R_i=k)=c_k$. Set $\tau _n=R_1+\ldots +R_n$, then 
\begin{gather*}
\mathbb{P} (\tau _n=k)=\sum _{k_1+\ldots +k_n=k}\, \, \prod _{i=1}^{n}c_{k_i}
\end{gather*}
and $\Phi ^{(n)}$ is the law of $Y(n)=X({\tau _n})$.

The other way to introduce the notion of discrete subordination is to use
Markov generators. Let $P$ be the operator of convolution by $\phi$. The
operator $L=I-P$ may be considered as minus the Markov generator of the
associated random walk. For a proper function $\psi$ we want to define a
"subordinated" random walk with Markov generator $-\psi (L)$. The
appropriate class of functions is the class of Bernstein functions, see the
book \textsc{Schilling, Song and Vondraček} \cite{BernsFunc}.

Recall that a function\ $\psi\in C^{\infty} (\gR ^+)$\ is called
a \textit{Bernstein function} if it is non-negative and $(-1)^{n-1}\psi
^{(n)}(x)\geq 0$, for all $x>0$ and all $n\in \gN $. The set of all Bernstein functions we denote by 
$\mathcal{BF}$. Each function $\psi \in \mathcal{BF}$ has
the following representation 
\begin{align}
\psi (\theta )=a+b\theta+\int _{(0,\infty )}\, \big( 1-e^{-\theta s} \big) %
\, \mathrm{d} \nu (s),  \label{levkch}
\end{align}
for some constants $a,b\geq 0$ and some measure $\nu$ (the L\'{e}vy measure) such
that 
\begin{gather*}
\int _{(0, \infty )}\, \min \{ 1, s\}\, \mathrm{d} \nu (s)< \infty .
\end{gather*}
\begin{prop}\label{prop111}
\cite[Proposition 2.3]{Lscbendikov}\label{prop1} Assume that\ $\psi$\ is a
Bernstein function with its representation (\ref{levkch}),\ such that\ $\psi
(0)=0,\, \psi (1)=1$\ and set 
\begin{equation}  \label{coefCpsi}
\begin{split}
c(\psi ,1)&=b+\int _{(0,\infty)} \, t e^{-t}\, \mathrm{d} \nu (t), \\
c(\psi ,n)&=\frac{1}{n! }\int _{(0,\infty)} \, t^n e^{-t}\, \mathrm{d} \nu
(t),\ n>1 .
\end{split}%
\end{equation}
Let\ $\phi$ be a probability on $\mathbb{Z} ^d$. Let\ $P$\ be the operator
of convolution by \ $\phi$\ and set 
\begin{gather}
P_\psi =I-\psi (I-P)\, .  \label{opPPsi}
\end{gather}
Then $P_\psi $ is the convolution by a probability $\Phi $\ defined as 
\begin{gather}
\Phi =\sum _{n\geq 1}c(\psi ,n)\, \phi ^{(n)}.  \label{fipsidens}
\end{gather}
\end{prop}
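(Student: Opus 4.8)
The plan is to verify directly, via the Lévy–Khintchine representation \eqref{levkch}, that the operator $P_\psi = I - \psi(I-P)$ acts as convolution by the measure $\Phi$ in \eqref{fipsidens}, and that $\Phi$ is indeed a probability. The starting point is the functional calculus: since $\|P\| \le 1$ on, say, $\ell^2(\mathbb{Z}^d)$ (or, more elementarily, thinking of $P$ as acting on bounded functions with $\phi$ a probability), we may substitute $\theta \mapsto I-P$ in \eqref{levkch}. Using $\psi(0)=0$ we get $a=0$, so
\begin{equation*}
\psi(I-P) = b(I-P) + \int_{(0,\infty)} \bigl(I - e^{-s(I-P)}\bigr)\, \mathrm{d}\nu(s).
\end{equation*}
Hence $P_\psi = I - \psi(I-P) = (I - b(I-P)) + \int_{(0,\infty)}\bigl(e^{-s(I-P)} - I\bigr)\,\mathrm{d}\nu(s)$. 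Adding and subtracting to collect the coefficient of $I$, one writes $P_\psi = \bigl(1 - b - \nu\text{-mass adjustments}\bigr) I + bP + \int e^{-s(I-P)}\,\mathrm{d}\nu(s)$, and the key computational step is to expand $e^{-s(I-P)} = e^{-s}\sum_{k\ge 0}\frac{s^k}{k!}P^k = e^{-s}\sum_{k\ge 0}\frac{s^k}{k!}P^k$.

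Next I would substitute this power series into the integral, interchange sum and integral (justified by the integrability $\int\min\{1,s\}\,\mathrm{d}\nu(s)<\infty$ together with $\|P\|\le 1$, which makes everything absolutely convergent), and read off the coefficient of each $P^n$. For $n\ge 1$ the coefficient is exactly $\frac{1}{n!}\int_{(0,\infty)} s^n e^{-s}\,\mathrm{d}\nu(s)$, augmented by $b$ in the case $n=1$; these are precisely the $c(\psi,n)$ of \eqref{coefCpsi}. Since $P^n$ is convolution by $\phi^{(n)}$, this shows $P_\psi$ is convolution by $\sum_{n\ge 1} c(\psi,n)\,\phi^{(n)}$ up to the $P^0=I$ term, i.e.\ up to a possible point mass at the origin. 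The coefficient of $P^0$ works out to $b\cdot 0 + \int e^{-s}\,\mathrm{d}\nu(s) - \nu((0,\infty))\cdot(\text{from }{-\psi})$; one checks it vanishes, which is where the normalization $\psi(1)=1$ enters.

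To finish, I would establish the two defining properties of a probability. Nonnegativity of the $c(\psi,n)$ is immediate from \eqref{coefCpsi} since $b\ge 0$, $\nu\ge 0$ and $t^n e^{-t}\ge 0$. For the total mass, the cleanest route is to evaluate $\psi$ at $\theta=1$: plugging $\theta=1$ into \eqref{levkch} with $a=0$ gives $1 = \psi(1) = b + \int_{(0,\infty)}(1-e^{-s})\,\mathrm{d}\nu(s)$, and expanding $1-e^{-s} = e^{-s}\sum_{n\ge 1}\frac{s^n}{n!}$ and interchanging yields $1 = b + \sum_{n\ge 1}\frac{1}{n!}\int s^n e^{-s}\,\mathrm{d}\nu(s) = \sum_{n\ge 1} c(\psi,n)$. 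Since each $\phi^{(n)}$ is a probability on $\mathbb{Z}^d$, it follows that $\sum_{x}\Phi(x) = \sum_{n\ge 1}c(\psi,n)\sum_x \phi^{(n)}(x) = \sum_{n\ge 1}c(\psi,n) = 1$, so $\Phi$ is a probability. The main obstacle I anticipate is purely bookkeeping: cleanly handling the $P^0$ term and making the interchange of summation and $\nu$-integration rigorous (the splitting $\min\{1,s\}$ near $0$ versus near $\infty$ has to be carried along), but no deep idea is needed beyond the expansion of $e^{-s(I-P)}$ and the two normalizations $\psi(0)=0$, $\psi(1)=1$.
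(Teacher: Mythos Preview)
Your proposal is correct. Note that the paper does not actually give its own proof of this proposition: it is cited from \cite[Proposition 2.3]{Lscbendikov}. However, the identical computation, with the operator $P$ replaced by the scalar $e^{-\lambda}$, is carried out in the paper inside the proof of Lemma~\ref{Lemma111} in order to establish the claim $\mathbb{E}(e^{-\lambda\tau_1})=1-\psi(1-e^{-\lambda})$; your argument is the operator version of exactly that calculation.

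One small point of bookkeeping: your description of the $P^0$ coefficient is a bit garbled. The clean way to see it is to keep $I-e^{-s(I-P)}$ together before expanding, so that the possibly divergent integral $\int_{(0,\infty)}\mathrm{d}\nu(s)$ never appears on its own. One gets
\[
I-e^{-s(I-P)} = (1-e^{-s})I - e^{-s}\sum_{k\ge 1}\frac{s^k}{k!}P^k,
\]
whence the coefficient of $I$ in $\psi(I-P)$ equals $b+\int_{(0,\infty)}(1-e^{-s})\,\mathrm{d}\nu(s)=\psi(1)=1$, and so the $P^0$ term in $P_\psi=I-\psi(I-P)$ vanishes. The interchange of sum and $\nu$-integral is immediate from Tonelli, since all terms are nonnegative and $\sum_{k\ge 1}\frac{s^k}{k!}e^{-s}=1-e^{-s}\le\min\{1,s\}$, which is $\nu$-integrable.
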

\begin{exa}
The power function
$\psi _\alpha (s)=s^{\alpha /2} $, $\alpha \in (0,2)$ belongs to the class $\mathcal{BF}$.
Its L\'{e}vy density $\nu_{\alpha}(t)$ is given by
\begin{align*}
\nu_{\alpha}(t)=\frac{\alpha/2}{\Gamma(1-\alpha/2)} t^{-1-\alpha/2}.
\end{align*}
The probabilities $c(\psi_{\alpha},n)$ are given by
\begin{align*}
c(\psi _\alpha, n)=\frac{\alpha /2}{\Gamma (1-\alpha /2)}\frac{\Gamma (n-\alpha /2)}{\Gamma (n+1)}\sim \frac{\alpha /2}{\Gamma (1-\alpha /2)} n^{-1-\alpha /2} .	
\end{align*}
Choosing $\psi =\psi _\alpha$ in Proposition \ref{prop111}, we see that the Markov generators of the initial and new random walks are related by the equation
\begin{align*}
I-P_{\psi _\alpha }=(I-P)^{\alpha /2}.
\end{align*}
\end{exa}

\begin{defin}
\label{defSubord} Let $X=\{X(n)\}_{n\geq 0}$ be the random walk driven by $%
\phi$. The random walk with the transition operator $P_\psi $ defined at (%
\ref{opPPsi}) will be called the\textit{\ $\psi$-subordinated random walk}
and will be denoted by $X_{\psi}=\{X_\psi (n)\}_{n\geq 0}$. When $\psi =
\psi _\alpha$ and $X=S$ is the simple
random walk, we call $X_\psi$ the \textit{$\alpha$-stable
random walk} and denote it by $S_\alpha$.
\end{defin}

It is straightforward to show that the increments of $S_\alpha$ belong to
the domain of attraction of the $\alpha$-stable law. This fact justifies the
name "$\alpha$-stable random walk" given in the Definition \ref{defSubord}.

\subsection*{Notation}

For any two non-negative functions $f$ and $g$, $f(r)\sim g(r)$ at $a$ means
that $\lim _{r\rightarrow a}f(r)/g(r)=1$, $f(x)=O(g(x))$ if $f(x)\leq Cg(x)$%
, for some constant $C>0$, and $f(x)\asymp g(x)$ if $f(x)=O(g(x))$ and $%
g(x)=O(f(x))$. 

\section{Green function asymptotic}

Let $S$ be the simple random walk and $\psi\in\mathcal{BF}$. Assuming that the subordinated random walk $S_{\psi}$
is transient we study asymptotic behaviour of its Green function
$G_{\psi}$.

In the course of study we will use the following technical assumption:
the function $\psi $ satisfies $\psi(0)=0$, $\psi(1)=1$ and
\begin{equation}
\psi (\lambda )= \lambda ^{\alpha /2}/l(1/\lambda ),\label{rvcbf1}
\end{equation}%
where $0<\alpha <2$ and $l(\lambda )$ varies slowly at infinity.

Recall that a function $f$ defined in a neighbourhood of $0$ is said to \textit{vary
regularly of index $\beta $} at $0$ if for all$\ \lambda >1,$ 
\begin{equation*}
\lim_{x\rightarrow 0}\frac{f(\lambda x)}{f(x)}=\lambda ^{\beta }.
\end{equation*}
When $\beta =0$, one says that $f$ \textit{varies slowly} at $0$. Any
regularly varying function of index $\beta $ is of the form $f(x)=x^{\beta
}l(x)$, where $l$ is a slowly varying function. For example, each of the
following functions vary regularly at $0$ of index $\beta $: $x^{\beta
}\left( \log 1/x\right) ^{\delta }\!\!\!,\ x^{\beta }\exp \{\left( \log
1/x\right) ^{\delta }\},\ 0<\delta <1,$ etc.

A function $F$ defined in a neighbourhood of $\infty$ is said to \textit{%
vary regularly of index $\beta $} at $\infty $ if $f(x)=F(1/x)$ varies 
regularly of index $-\beta $ at $0.$\\
\par 
Let $c(\psi ,k)$, $k\in \gN$, be the probabilities defined at (\ref{coefCpsi}). For $k\leq 0$ we set
$c(\psi,k)=0$ and consider $\tau=(\tau_n)_{n\geq 0}$ -- random walk
on $\gZ$ whose increments $\tau _{n+1}-\tau _n$ have distribution $c =\{c(\psi ,k)\}_{k\in \gZ}$. The random walk $\tau$
has non-negative increments, in particular it is transient. Let
\begin{align*}
C(B)=\sum_{k\geq 0} c^{(k)} (B),\quad B\subset \gZ
\end{align*}
be its potential measure; here $c^{(k)}$ is the Dirac measure
concentrated at $0$ when $k=0$ and the $k$-fold convolution of the
probability $c$ when $k\geq 1$. Setting $C(n)=C(\{n\})$ we obtain $C(n)=0$ for $n<0$,
$C(0)=1$ and
\begin{align*}
C(n)=\sum_{k=1}^{n} c(\psi,k)C(n-k),\quad n \geq 1.
\end{align*}

Recall that a function $\psi\in \mathcal{BF}$ is called a special Bernstein function, in short
$\psi\in \mathcal{SBF}$, if the function $\lambda \big/ \psi (\lambda) $ is also a Bernstein function. 
Evidently $\psi _\alpha \in \mathcal{SBF}$ whereas $\psi (\lambda )=1-e^{-\lambda }$ does not belong to $\mathcal{SBF}$. In particular, $\mathcal{SBF}\subset \mathcal{BF}$ is a proper inclusion.
\begin{lem}\label{Lemma111}
Let $\psi \in \mathcal{BF}$ satisfy (\ref{rvcbf1}). The strong renewal property  
\begin{equation}\label{c_kRel}
C(n)\sim \frac{1}{\Gamma (\alpha /2 )}n^{\alpha /2-1}l(n),\quad n\rightarrow \infty ,
\end{equation}
holds in the following two cases:
\begin{itemize}
\item[(i)] $\psi \in \mathcal{BF}$ and $1<\alpha <2$,
\item[(ii)] $\psi \in \mathcal{SBF}$ and $0<\alpha <2$.
\end{itemize}
\end{lem}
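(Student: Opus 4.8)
The plan is to recognize $C(n)$ as the renewal sequence of the subordinator walk $\tau$ and to read off its asymptotics from a Tauberian theorem applied to the generating function $\sum_{n\ge 0}C(n)z^n$. First I would note that the probability generating function of the increment law $c=\{c(\psi,k)\}$ equals $1-\psi(1-z)$ for $z\in[0,1)$: indeed, by Proposition \ref{prop111} (or directly from \eqref{coefCpsi}), $\sum_{k\ge 1}c(\psi,k)z^k$ is obtained by substituting the operator of multiplication by $z$ into $P\mapsto P_\psi=I-\psi(I-P)$, so the generating function of the convolution powers gives
\begin{equation*}
\sum_{n\ge 0}C(n)z^n=\sum_{k\ge 0}\Big(\sum_{j\ge 1}c(\psi,j)z^j\Big)^k=\frac{1}{1-(1-\psi(1-z))}=\frac{1}{\psi(1-z)}.
\end{equation*}
Now set $z=e^{-\lambda}$, so $1-z\sim\lambda$ as $\lambda\downarrow 0$; using \eqref{rvcbf1} together with the uniform convergence theorem for slowly varying functions, $\psi(1-z)\sim(1-z)^{\alpha/2}/l(1/(1-z))\sim\lambda^{\alpha/2}/l(1/\lambda)$ as $\lambda\downarrow 0$. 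Hence the Laplace--Stieltjes transform of the measure $C$ behaves like $\lambda^{-\alpha/2}l(1/\lambda)$ at the origin, and Karamata's Tauberian theorem (e.g. in the form of Bingham--Goldie--Teugels, Theorem 1.7.1) yields
\begin{equation*}
\sum_{k=0}^{n}C(k)\sim\frac{1}{\Gamma(\alpha/2+1)}\,n^{\alpha/2}\,l(n),\qquad n\to\infty.
\end{equation*}

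The remaining, and genuinely delicate, step is to pass from this \emph{Cesàro} statement to the pointwise asymptotic \eqref{c_kRel} for $C(n)$ itself; this is exactly the ``strong renewal theorem'' and is where the two hypotheses (i) and (ii) enter. If the increment sequence $c(\psi,k)$ were eventually monotone one could apply a monotone-density Tauberian argument directly, but in general it need not be, so I would instead invoke the strong renewal theorems available for random walks on $\mathbb Z_+$ with step distribution in the domain of attraction of a stable law of index $\alpha/2\in(0,1)$ (Garsia--Lamperti, Erickson, and for the full range the Caravenna--Doney criterion). In case (i), $1<\alpha<2$ means the index $\alpha/2$ lies in $(1/2,1)$, where the Garsia--Lamperti condition on $\sum_{k\le n}c(\psi,k)^2$ is automatically satisfied and the strong renewal theorem holds unconditionally; this gives \eqref{c_kRel} with the constant $1/\Gamma(\alpha/2)$ (note $\Gamma(\alpha/2+1)=(\alpha/2)\Gamma(\alpha/2)$, and the density of $t^{\alpha/2}$ is $(\alpha/2)t^{\alpha/2-1}$, so the constants match). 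In case (ii), $\psi\in\mathcal{SBF}$, I would exploit the defining property that $\lambda/\psi(\lambda)$ is again a Bernstein function: this forces a regularity/smoothness on the coefficients $c(\psi,k)$ — concretely, one can show the tail $\sum_{j>k}c(\psi,j)$ is, up to constants, itself a renewal-type sequence, which provides the missing control (the analogue of Erickson's side condition) needed to run the strong renewal theorem for all $\alpha/2\in(0,1)$.

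Thus the skeleton is: (1) identify the generating function as $1/\psi(1-z)$; (2) apply Karamata Tauberian to get the Cesàro asymptotics; (3) upgrade to pointwise asymptotics via a strong renewal theorem, using $1/2<\alpha/2<1$ in case (i) and the special-Bernstein structure in case (ii) to verify the required side condition. I expect step (3), and specifically the verification in case (ii) that $\psi\in\mathcal{SBF}$ supplies the Garsia--Lamperti/Erickson-type hypothesis, to be the main obstacle; everything else is a routine combination of the subordination formula with classical regular-variation Tauberian theory.
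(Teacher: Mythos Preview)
Your setup (steps (1)--(2)) and your treatment of case (i) coincide with the paper's: the generating function identity $\sum_{n\ge0}C(n)z^n=1/\psi(1-z)$, Karamata for the Ces\`aro sum $\sum_{k\le n}C(k)$, then Garsia--Lamperti for the pointwise asymptotic when $\alpha/2\in(1/2,1)$.

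For case (ii), however, the paper does \emph{not} try to verify a side condition for a strong renewal theorem. It uses the defining analytic consequence of $\psi\in\mathcal{SBF}$ directly: $1/\psi(\lambda)=b+\int_0^\infty e^{-\lambda t}u(t)\,\mathrm{d}t$ with $u$ non-increasing. Expanding $\mathcal{L}M(\lambda)=1/\psi(1-e^{-\lambda})$ in powers of $e^{-\lambda}$ and matching coefficients yields the exact formula
\[
C(k)=\frac{1}{k!}\int_0^\infty t^{k}e^{-t}u(t)\,\mathrm{d}t.
\]
Karamata together with the monotone density theorem (monotonicity of $u$ is precisely what $\mathcal{SBF}$ buys) gives $u(t)\sim\Gamma(\alpha/2)^{-1}t^{\alpha/2-1}l(t)$, and since the Poisson weight $t^{k}e^{-t}/k!$ concentrates near $t=k$, a short Laplace-type splitting of the integral around $t=k$ produces $C(k)\sim u(k)$. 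This is a direct computation that entirely bypasses the delicate strong-renewal machinery for $\alpha/2\le 1/2$.

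Your plan for (ii) --- to argue that $\psi\in\mathcal{SBF}$ supplies an Erickson/Caravenna--Doney-type hypothesis on the step law $c(\psi,\cdot)$ --- is not clearly workable as stated. The claim that ``the tail $\sum_{j>k}c(\psi,j)$ is, up to constants, itself a renewal-type sequence'' is vague, and it is not apparent how the Bernstein property of $\lambda/\psi(\lambda)$ translates into the specific \emph{local} regularity of $c(\psi,\cdot)$ that those criteria demand. The paper's route through the potential density $u$ is both simpler and is what makes the special-Bernstein hypothesis genuinely do work here; if you want to carry out your programme you would need to say concretely which side condition you are checking and how $\mathcal{SBF}$ implies it.
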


\begin{proof}
Define an auxiliary function $M(x)$, $x\in \gR$, as
\begin{equation*}
M(x)=\sum_{k\leq x}C(k).
\end{equation*}
Observe that $M$ is a right continuous step-function having jumps at integers.
More precisely $M(x)=0$ for $x<0$, $M(x)=C(0)$ for $0\leq x < 1$, $M(x)= C(0)+C(1)$ for
$1\leq x < 2$ etc. We compute the Laplace-Stieltjes transform $\mathcal{L}(M)$
of the function $M$, 
\begin{align}
\mathcal{L}(M)(\lambda )& =\int_{\gR }e^{-\lambda x}\mathrm{d}M(x)=\sum_{k=0}^{\infty }e^{-\lambda k}C(k)\label{LMc(k)}\\
& =\sum_{k=0}^{\infty }\sum _{n=0}^\infty e^{-\lambda k}c ^{(n)}(k)
=\sum_{n=0}^{\infty
}\sum_{k=0}^{\infty }e^{-\lambda k}\,\mathbb{P}(\tau _{n}=k) \nonumber\\
& =\sum_{n=0}^{\infty }\Big(\mathbb{E}\big(e^{-\lambda \tau _{1}}\big)\Big)%
^{n}=\frac{1}{1-\mathbb{E}\big(e^{-\lambda \tau _{1}}\big)}\nonumber.
\end{align}%
We claim that, 
\begin{equation}
\mathbb{E}\big(e^{-\lambda \tau _{1}}\big)=1-\psi \big(1-e^{-\lambda }\big)%
\,.\label{claim1}
\end{equation}%
Indeed, by Proposition \ref{coefCpsi}, 
\begin{equation*}
\mathbb{E}\big(e^{-\lambda \tau _{1}}\big)=\sum_{k=1}^{\infty }e^{-\lambda
k}\,c(\psi ,k).
\end{equation*}%
Using (\ref{levkch}) and the fact that $\psi (1)=1$ we obtain 
\begin{align*}
1-\psi (1-e^{-\lambda })& =1-b(1-e^{-\lambda })-\int_{(0,\infty )}\,\big(%
1-e^{-t(1-e^{-\lambda })}\big)\,\mathrm{d}\nu (t) \\
& =1-\Big(b+\int_{(0,\infty )}\,(1-e^{-t})\,\mathrm{d}\nu (t)\Big)%
+be^{-\lambda } \\
& \qquad +\int_{(0,\infty )}\,e^{-t}\sum_{n=1}^{\infty }\frac{%
t^{n}e^{-n\lambda }}{n!}\,\mathrm{d}\nu (t) \\
& =be^{-\lambda }+\sum_{n\geq 1}\frac{1}{n!}\Big(\int_{(0, \infty )}\,e^{-t}t^{n}\,\mathrm{d}\nu (t)\Big)e^{-\lambda n}=\sum_{n\geq 1}c(\psi
,n)e^{-\lambda n},
\end{align*}%
as desired. It follows that  
\begin{equation*}
\mathcal{L}M(\lambda )=\frac{1}{\psi \big(1-e^{-\lambda }\big)}.
\end{equation*}%
Hence, by (\ref{rvcbf1}) we obtain 
\begin{equation*}
\mathcal{L}M(\lambda )\sim \lambda ^{-\alpha /2}l(1/\lambda ),\quad \mathrm{as}\ \lambda \to 0^+.
\end{equation*}%
By the Karamata's Tauberian Theorem \cite[Theorem 1.7.1]{Bingham}, 
\begin{equation}
M(x)\sim \frac{1}{\Gamma \big(1+\frac{\alpha }{2}\big)}x^{\frac{\alpha }{2}%
}\,l(x),\quad \mathrm{as}\ x\rightarrow \infty. \label{integrated potential}
\end{equation}%
By \cite[Theorem 8.7.3]{Bingham}, the equation (\ref{integrated potential}) is equivalent to
\begin{align}
\sum _{k=n}^\infty c(\psi , k)\sim \frac{n^{-\alpha /2}}{l(n)\Gamma\big(1-\frac{\alpha}{2}\big)},\quad \mathrm{as}\ n\to \infty .\label{sum_c(psi)}
\end{align}
Moreover, recall that
\begin{align}
C(0)=1,\qquad C(n)=\sum_{k=1}^{n} c(\psi,k)C(n-k),\quad n > 1.\label{renewal_recur}
\end{align}
The celebrated Garsia-Lamperti theorem \cite[Theorem 1.1]{Garsia} says that (\ref{renewal_recur}) and (\ref{sum_c(psi)}) imply that, when $1<\alpha <2$,
\begin{align*}
C(n)\sim \Gamma\Big(1-\frac{\alpha}{2}\Big)\frac{\sin \big(\pi \alpha /2\big)}{\pi }n^{\alpha /2 -1}l(n),\quad \mathrm{as}\ n\to \infty .
\end{align*}
Using the Euler's reflection formula
\begin{align*}
\Gamma(z)\Gamma(1-z)=\frac{\pi}{\sin (\pi z)} 
\end{align*}
 we obtain (\ref{c_kRel}).

Let us pass to the proof of (ii).
Since $\psi \in \mathcal{SBF}$, we have
\begin{align}
\frac{1}{\psi (\lambda)}=b + \int _0^\infty e^{-\lambda t}u(t) \ud t
\end{align}
for some $b\geq 0$ and some non-increasing function $u\colon (0,\infty )\mapsto (0,\infty )$ satisfying $\int _0^1u(t)\ud t<\infty$, see \cite[Theorem 11.3]{BernsFunc}. Set $\Phi (\lambda )=1\big/ \psi (\lambda )$ and observe that by (\ref{rvcbf1}), 
\begin{align*}
\Phi (\lambda)\sim \lambda ^{-\alpha /2}l(1/\lambda),\quad \lambda \to 0.
\end{align*}
Applying both the Karamata Tauberian Theorem \cite[Theorem 1.7.1]{Bingham}
and the Monotone Density Theorem we obtain
\begin{align}
u(t)\sim \frac{1}{\Gamma (\alpha /2)}t^{\alpha /2-1}l(t),\quad t\to \infty .\label{asympu}
\end{align}
On the other hand
\begin{align*}
\mathcal{L}(M)(\lambda )=\Phi (1-e^{-\lambda})=\sum _{k\geq 0}\frac{(-1)^k\Phi ^{(k)}(1)}{k!}e^{-\lambda k},
\end{align*}
whence by the uniqueness of the Laplace transform we obtain
\begin{align*}
C(k)=\frac{1}{k!}\int _0^\infty t^ke^{-t}u(t)\ud t, \quad k\in \gN.
\end{align*}
We claim that
\begin{align*}
C(k)=\frac{1}{k!}\int _{k/2}^{2k} t^ke^{-t}u(t)\ud t +O((2/e)^{k}).
\end{align*}
To prove the claim observe that the function $t\mapsto t^ke^{-t}$ is unimodal with $\max$ at the point $t=k$.
Hence for $a,b$ and $k$ large enough we will have
\begin{align*}
\int _1^a t^ke^{-t}u(t)\ud t \leq a^ke^{-a}\int _1^a u(t) \ud t,\quad a<k 
\end{align*}
and
\begin{align*}
\int _b^\infty t^ke^{-t}u(t)\ud t \leq b^{k+1}e^{-b}\int_b^\infty \frac{u(t)}{t}\ud t
,\quad b>k .
\end{align*}
In particular, choosing $a=k/2$, $b=2k$ and applying (\ref{asympu}) we obtain
\begin{align*}
\frac{1}{k!}\Big( \int _0^{k/2} t^ke^{-t}u(t)\ud t + \int _{2k}^\infty t^ke^{-t}u(t)\ud t \Big)= O\big((2/e)^k\big),
\end{align*}
which evidently proves the claim.

Once again applying (\ref{asympu}) we get
\begin{align*}
\frac{1}{k!}\int _{k/2}^{2k} t^ke^{-t}u(t)\ud t&\sim \frac{l(k)}{k!\Gamma (\alpha /2)}\int _{k/2}^{2k} t^{k+\alpha /2-1}e^{-t}\ud t.
\end{align*}
It is straightforward to show that 
\begin{align*}
\frac{1}{k!}\int _{k/2}^{2k} t^{k+\alpha /2-1}e^{-t}\ud t
= \frac{1}{k!}\int _0^\infty  t^{k+\alpha /2-1}e^{-t}\ud t + O\big((2/e)^{k}\big).
\end{align*}
At last, all the above show that
\begin{align*}
C(k)\sim \frac{l(k)\Gamma (k+\alpha /2)}{\Gamma (\alpha /2)\Gamma (k+1)}
\sim \frac{1}{\Gamma (\alpha /2) }k^{\alpha /2 -1}l(k).
\end{align*} 
The proof of (ii) is finished.
\end{proof}
\begin{rem} 
Remember that in the continuous time setting to each function $\psi \in \mathcal{BF}$ is associated a unique
convolution semigroup $(\eta _t)_{t>0}$ of measures supported on $[0,\infty )$ such that 
\begin{align*}
\mathcal{L}\eta _t(\lambda) = e^{-t\psi (\lambda )}.
\end{align*}
A function $\psi \in \mathcal{SBF}$ is characterized by the fact that the potential measure $U=\int_0^\infty \eta _t\ud t$ restricted to $(0,\infty )$ is absolutely continuous with respect to the Lebesgue measure and its density $u(t)$ is a decreasing function. \textit{Whether this is true in the discrete time setting, i.e. the sequence $C(k)$ is decreasing, is an open question at the present writing}.
\end{rem}

We present here some partial answer to this question. 
Recall that a function $\psi\in \mathcal{BF}$ is called a complete Bernstein function,
$\psi\in \mathcal{CBF}$ in short, if its L\'{e}vy measure $\nu$ is absolutely continuous with
respect to the Lebesgue measure and its density $\nu(s)$ is completely
monotone, i.e.
\begin{align*}
\nu(s)=\int_{[0, \infty )} e^{-st} \mu(dt),
\end{align*}
Observe that in fact $\mu $ is supported on $(0,\infty )$ and satisfies 
\begin{align*}
\int_{(0, \infty )} \min(t^{-1},t^{-2}) \mu(dt)<\infty .
\end{align*} 
 $\mathcal{CBF}\subset \mathcal{SBF}$ is a proper inclusion. For all of this we refer to \cite{BernsFunc}.
\begin{thm}
For $\psi \in \mathcal{CBF}$ the renewal sequence $\{C(k)\}_{k\in \gN}$ defined as
\begin{align*}
C(0)=1\quad \mathrm{and}\quad C(k)=\sum _{n=0}^k c(\psi ,n)C(k-n),\quad k\geq 1,
\end{align*}
is decreasing.
\end{thm}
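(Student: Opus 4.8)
The plan is to exploit the complete monotonicity built into the class $\mathcal{CBF}$ and translate it, through the Laplace-transform identity already derived in the proof of Lemma \ref{Lemma111}, into a statement about the sequence $\{C(k)\}$. Recall from that proof that
\begin{align*}
\mathcal{L}(M)(\lambda)=\frac{1}{\psi(1-e^{-\lambda})}=\sum_{k\geq 0}C(k)e^{-\lambda k},
\end{align*}
so if we put $r=e^{-\lambda}\in(0,1)$ and set $g(r)=1/\psi(1-r)$, then $g(r)=\sum_{k\geq 0}C(k)r^{k}$ is the generating function of the renewal sequence. The monotonicity $C(k+1)\le C(k)$ for all $k$ is equivalent to the statement that $(1-r)g(r)=\sum_{k\geq 0}\big(C(k)-C(k-1)\big)r^{k}$ — with the convention $C(-1)=0$ — has all its Taylor coefficients of order $k\ge 1$ nonpositive, i.e.\ that $h(r):=(1-r)/\psi(1-r)=C(0)+\sum_{k\ge 1}(C(k)-C(k-1))r^k$ has the form $1 - (\text{a power series with nonnegative coefficients})$. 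Equivalently, writing $s=1-r\in(0,1)$, we must show that $s\big/\psi(s)$, expanded as a power series in $1-s$ around $s=1$, has nonnegative coefficients except for the constant term. Since $\psi\in\mathcal{CBF}$ implies $\psi\in\mathcal{SBF}$, we know $s/\psi(s)$ is itself a Bernstein function; the task is to upgrade this to the \emph{discrete} statement about coefficients in powers of $1-s$.

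The key tool is the integral representation of $1/\psi$ for $\psi\in\mathcal{SBF}$ already invoked in part (ii) of Lemma \ref{Lemma111}: $1/\psi(\lambda)=b+\int_0^\infty e^{-\lambda t}u(t)\,\ud t$ with $u$ nonincreasing, which gave $C(k)=\tfrac{1}{k!}\int_0^\infty t^k e^{-t}u(t)\,\ud t$ for $k\ge 1$. Then
\begin{align*}
C(k)-C(k+1)=\int_0^\infty e^{-t}\Big(\frac{t^k}{k!}-\frac{t^{k+1}}{(k+1)!}\Big)u(t)\,\ud t
=\int_0^\infty e^{-t}\frac{t^k}{k!}\Big(1-\frac{t}{k+1}\Big)u(t)\,\ud t.
\end{align*}
The integrand is positive for $t<k+1$ and negative for $t>k+1$, so positivity of this difference is exactly the assertion that the probability-weighted average of $u$ against the (normalized) Gamma$(k+1,1)$ density, split at its mean $k+1$, is dominated by the contribution from $\{t<k+1\}$; because $u$ is nonincreasing this is plausible but not automatic for a general $\mathcal{SBF}$ function — this is precisely the gap the Remark flags as open. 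For $\psi\in\mathcal{CBF}$ we have the stronger structural information that $u$ is not merely nonincreasing but \emph{completely monotone}: indeed for $\psi\in\mathcal{CBF}$ one has $1/\psi\in\mathcal{CBF}$ as well (the complete Bernstein functions are closed under $\psi\mapsto \lambda/\psi(\lambda)$ and hence $1/\psi(\lambda)=\tfrac1\lambda\cdot\tfrac{\lambda}{\psi(\lambda)}$ is a Stieltjes function, so $u$ is a Laplace transform of a positive measure, i.e.\ completely monotone). Thus $u(t)=\int_{[0,\infty)}e^{-ts}\,\rho(\ud s)$ for some positive measure $\rho$, and substituting this into the formula for $C(k)$ and interchanging integrals gives
\begin{align*}
C(k)=\frac{1}{k!}\int_{[0,\infty)}\Big(\int_0^\infty t^k e^{-t(1+s)}\,\ud t\Big)\rho(\ud s)
=\int_{[0,\infty)}\frac{\rho(\ud s)}{(1+s)^{k+1}}.
\end{align*}
Now each term $(1+s)^{-(k+1)}$ is visibly decreasing in $k$ (strictly so for $s>0$), and $\rho$ is a positive measure, so $C(k)$ is decreasing — this is the whole proof once the representation is in place.

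The main obstacle is therefore the reduction step: establishing cleanly that $\psi\in\mathcal{CBF}$ forces $u$ to be completely monotone, i.e.\ that $1/\psi$ is a Stieltjes function. I would handle this by quoting the standard characterization (e.g.\ from Schilling–Song–Vondra\v{c}ek \cite{BernsFunc}) that $\psi\in\mathcal{CBF}$ iff $\lambda/\psi(\lambda)\in\mathcal{CBF}$, combined with the fact that $g\in\mathcal{CBF}$ iff $g(\lambda)/\lambda$ is a Stieltjes function, and that Stieltjes functions are exactly the functions of the form $a/\lambda + b + \int_{(0,\infty)}(\lambda+s)^{-1}\sigma(\ud s)$; writing $1/\psi(\lambda)=\tfrac1\lambda\cdot\tfrac{\lambda}{\psi(\lambda)}$ then exhibits $1/\psi$ as a Stieltjes function, whose representing measure after a change of variables yields exactly the $\rho$ above. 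One minor point to check is the behaviour at $k=0$: the recursion fixes $C(0)=1$ while the integral formula $\tfrac1{k!}\int t^k e^{-t}u(t)\,\ud t$ was derived for $k\ge 1$, so I would verify separately that $C(0)\ge C(1)$, which follows from $C(1)=\int(1+s)^{-2}\rho(\ud s)\le$ the normalization forced by $\psi(1)=1$, or more directly from $C(1)=c(\psi,1)\le 1$ since $c(\psi,1)$ is a probability weight in the decomposition \eqref{fipsidens}. Modulo these bookkeeping items, the argument is short and the essential content is the Stieltjes representation of $1/\psi$.
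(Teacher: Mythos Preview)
Your argument is correct and takes a genuinely different route from the paper's. The paper also exploits complete monotonicity, but on the \emph{L\'evy side} rather than the \emph{potential side}: it writes the L\'evy density $\nu$ of $\psi$ as a Laplace transform (the defining property of $\mathcal{CBF}$) and deduces $c(\psi,n)=\int_{(0,\infty)}(1+s)^{-(n+1)}\mu(\ud s)$ --- structurally the same formula you obtain, but for the increment law rather than the renewal sequence. From this the paper gets log-convexity of $\{c(\psi,n)\}$ by Cauchy--Schwarz, invokes a theorem of de~Bruijn and Erd\H{o}s to push log-convexity through the renewal recursion to $\{C(k)\}$, and finally combines log-convexity with the a~priori bound $C(k)\le 1$ to force monotonicity. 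Your approach short-circuits all of this: by using instead that $1/\psi$ is Stieltjes (equivalently, that the potential density $u$ is completely monotone), you land directly on $C(k)=\int_{[0,\infty)}(1+s)^{-(k+1)}\rho(\ud s)$ for $k\ge 1$, from which monotonicity is immediate --- no de~Bruijn--Erd\H{o}s, no boundedness argument. What the paper's detour buys is the explicit log-convexity of $C(k)$, but your representation yields that too by the same Cauchy--Schwarz step. Two small points worth tightening: the formula $C(k)=\tfrac{1}{k!}\int_0^\infty t^k e^{-t}u(t)\,\ud t$ is derived in the proof of Lemma~\ref{Lemma111}(ii) \emph{without} using the regular-variation hypothesis~(\ref{rvcbf1}), so it is legitimately available here; and for the Stieltjes step it is cleaner to quote directly from \cite{BernsFunc} that $\psi\in\mathcal{CBF}$ iff $1/\psi$ is a nonzero Stieltjes function, rather than routing through $\lambda/\psi(\lambda)$.
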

\begin{proof}
We give a proof of the statement in four steps.\\
\textit{Claim 1}. There exist a measure $m$ on $(0,\infty)$ such that
\begin{align*}
c(\psi ,1) = b+\int _{(0,\infty )}e^{-2r} m(\ud r)
\end{align*}
and
\begin{align*}
c(\psi,n)=\int_{(0, \infty )} e^{-(n+1)r} m(\ud r),\quad n>1.
\end{align*}
We consider the case $n>1$. Since $\psi\in\mathcal{CBF}$, 
\begin{align*}
 c(\psi,n)&=\frac{1}{n!}\int_{(0, \infty )} t^n e^{-t} \nu(t)\ud t\\
   &= \frac{1}{n!}\int_{(0, \infty )}\ud  t\, t^n e^{-t}\int_{(0,\infty )}e^{-st}\mu(\ud s)\\
   &=\int_{(0, \infty )}\mu(\ud s)\frac{1}{n!}\int_{(0,\infty )} t^n e^{-t(1+s)}\ud t
   =\int_{(0, \infty )}\frac{\mu(\ud s)}{(1+s)^{n+1}}.
\end{align*}
Substitution $\log (1+s)=r$ gives
\begin{align*}
    c(\psi,n)=\int_{(0, \infty )} e^{-(n+1)r} m(\ud r),
\end{align*}
as desired.\\
\textit{Claim 2}. $ \{c(\psi ,n)\}_{n\in \gN} $ satisfies
\begin{align*}
c(\psi,n-1)c(\psi,n+1)>c(\psi,n)^2,\quad n>1.
\end{align*}
It is enough to consider the case $n>2$. We apply Claim 1,
\begin{align*}
c(\psi,n-1)c(\psi,n+1)&=
\int_{(0, \infty )} m(\ud s)\int_{(0, \infty )} m(\ud t) e^{-\{ns+(n+2)t\}}\\
&=\int_{(0, \infty )} m(\ud s)\int_{(0, \infty )} m(\ud t)  e^{-(n+1)(s+t)}e^{s-t}\\
&=\int_{(0, \infty )} m(\ud s)\int_{(0, \infty )} m(\ud t)  e^{-(n+1)(s+t)}\cosh (s-t)\\
&> \int_{(0, \infty )} m(\ud s)\int_{(0, \infty )} m(\ud t)  e^{-(n+1)(s+t)}
=c(\psi,n)^2.
\end{align*}
The strong inequality follows from the fact that, by (\ref{rvcbf1}), $m$ is not a Dirac measure.\\
\textit{Claim 3}. $\{C(n)\}_{n\in \gN}$ satisfies
\begin{align*}
C(n-1)C(n+1)>C(n)^2,\quad n>1.
\end{align*}
Indeed, we have 
\begin{align*}
C(n)=\sum_{k=1}^{n} c(\psi,k)C(n-k),\quad n\geq 1
\end{align*}
and
\begin{align*}
c(\psi,n-1)c(\psi,n+1)>c(\psi,n)^2,\quad n>1.
\end{align*}
The remarkable de Bruijn-Erd\"{o}s theorem \cite[Theorem 1]{Erdos} yields the desired result.

Finally we prove that $\{C(k)\}_{k\in \gN}$ is a decreasing sequence. By
Claim 3, the sequence $C(k+1)/C(k)$ increases. Assume that $C(k_0+1)/C(k_0)\geq
1$, for some $k_0\in \gN$. Then there are some $a>1$ and $N\in \gN$ such that $C(k+1)/C(k)>a$ for all
$k\geq N$. It follows that $C(n)\geq a^{n-N}C(N)$, for all $n\geq N$.
Contradiction, because for all $n\geq 0$,
\begin{align*}
C(n)=\gP(\exists k: \tau _k=n)\leq 1.
\end{align*}
Thus $C(k)$ decreases. 
\end{proof}
Let $p(n,x)$ be a transition function of the simple random walk $S$. By $p_\psi (n,x)$ we denote a transition function of the subordinated random walk $S_\psi$ and by $G_{\psi }$ its Green function,
\begin{align*}
p_\psi (n,x) = \sum _{k = 1}^{\infty }p(k,x)\gP (\tau _n = k)
\end{align*}
and
\begin{eqnarray*}
G_{\psi }(x) =\sum_{n=1}^{\infty }p_{\psi }(n,x)  =\sum_{k=1}^{\infty }p(k,x)\,C(k).
\end{eqnarray*}

\begin{thm}
\label{Asymptotic} Assume that $\psi \in \mathcal{BF}$ satisfies (\ref{rvcbf1}) with $0<\alpha <d$ and that (\ref{c_kRel}) holds. Then 
\begin{equation*}
G_{\psi }(x)\sim \frac{C_{d,\alpha }}{\Vert x\Vert ^{d}\psi (1/\Vert x\Vert
^{2})},\quad x\to \infty ,
\end{equation*}%
where 
\begin{equation*}
C_{d,\alpha }=\Big(\frac{d}{2}\Big)^{\alpha /2}\frac{\,\,\pi ^{-d/2}}{\Gamma %
\big(\frac{\alpha }{2}\big)}\,\Gamma \Big(\frac{d-\alpha }{2}\Big).
\end{equation*}
\end{thm}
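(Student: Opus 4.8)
The proof will start from the identity $G_{\psi}(x)=\sum_{k\ge1}p(k,x)\,C(k)$ recorded just above the statement, which reduces everything to a single weighted sum involving the transition kernel of the simple random walk $S$. The plan is to feed into this sum the hypothesis (\ref{c_kRel}), namely $C(k)\sim\Gamma(\alpha/2)^{-1}k^{\alpha/2-1}l(k)$, together with the local central limit theorem for $S$, which states that $p(k,x)=0$ unless $k$ and $x_{1}+\cdots+x_{d}$ have the same parity and that, on the parity-admissible $k$,
\begin{equation*}
p(k,x)=2\Big(\frac{d}{2\pi k}\Big)^{d/2}e^{-d\Vert x\Vert^{2}/(2k)}\,\big(1+o(1)\big)
\end{equation*}
uniformly for $k$ comparable to $\Vert x\Vert^{2}$, as well as the standard Gaussian upper bound $p(k,x)\le C\,k^{-d/2}e^{-c\Vert x\Vert^{2}/k}$, valid for all $k\ge1$ and all $x$. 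Writing $r=\Vert x\Vert$, I would split the sum at the two scales $\varepsilon r^{2}$ and $Kr^{2}$, estimate the bulk $\varepsilon r^{2}\le k\le Kr^{2}$ by the local limit theorem, bound the two outer ranges by the Gaussian estimate, and only at the end let $\varepsilon\to0$ and $K\to\infty$.

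For the bulk I would substitute $k=r^{2}s$. On $s\in[\varepsilon,K]$ the local limit theorem and (\ref{c_kRel}) both hold uniformly, and by the uniform convergence theorem for slowly varying functions \cite[Theorem 1.2.1]{Bingham} one has $l(r^{2}s)\sim l(r^{2})$ uniformly; since consecutive parity-admissible values of $k$ are spaced by $2$, the factor $2$ in the local limit theorem is exactly offset when the sum is compared with a Riemann integral, and the partial sum over the bulk becomes asymptotic to
\begin{equation*}
\frac{l(r^{2})}{\Gamma(\alpha/2)}\Big(\frac{d}{2\pi}\Big)^{d/2}r^{\alpha-d}\int_{\varepsilon}^{K}s^{(\alpha-d)/2-1}e^{-d/(2s)}\,\mathrm{d}s .
\end{equation*}
Letting $\varepsilon\to0$ and $K\to\infty$, the substitution $u=d/(2s)$ evaluates the integral to $(d/2)^{(\alpha-d)/2}\,\Gamma\big((d-\alpha)/2\big)$, and since $(d/(2\pi))^{d/2}(d/2)^{(\alpha-d)/2}=(d/2)^{\alpha/2}\pi^{-d/2}$ this produces precisely $C_{d,\alpha}\,r^{\alpha-d}l(r^{2})$.

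It then remains to check that the two outer ranges are negligible compared with $C_{d,\alpha}r^{\alpha-d}l(r^{2})$. On $k\le\varepsilon r^{2}$ I would write $e^{-cr^{2}/k}\le e^{-c/(2\varepsilon)}e^{-cr^{2}/(2k)}$ in the Gaussian bound; the remaining sum $\sum_{k}k^{(\alpha-d)/2-1}e^{-cr^{2}/(2k)}l(k)$ is, by the same Karamata-type reasoning as above, again $\asymp r^{\alpha-d}l(r^{2})$, so the whole contribution of this range is $O\big(e^{-c/(2\varepsilon)}r^{\alpha-d}l(r^{2})\big)$, as small as we like once $\varepsilon$ is small. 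On $k\ge Kr^{2}$ I would discard the exponential and use $C(k)\asymp k^{\alpha/2-1}l(k)$, so the contribution is $O\big(\sum_{k\ge Kr^{2}}k^{(\alpha-d)/2-1}l(k)\big)$; because $\alpha<d$ the exponent is $<-1$, so Karamata's theorem \cite{Bingham} gives this tail $\asymp K^{(\alpha-d)/2}r^{\alpha-d}l(r^{2})$, small once $K$ is large, while the finitely many remaining small-$k$ terms vanish identically once $\Vert x\Vert$ is large. Combining the three estimates and recalling that (\ref{rvcbf1}) gives $\psi(1/r^{2})=r^{-\alpha}/l(r^{2})$, so that $r^{\alpha-d}l(r^{2})=\big(r^{d}\psi(1/r^{2})\big)^{-1}$, yields the assertion.

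The step I expect to be the real obstacle is making the three-region decomposition rigorous rather than the arithmetic of the constant: one must invoke the local limit theorem with an error that is uniform over the whole window $k\in[\varepsilon r^{2},Kr^{2}]$, combine it cleanly with the uniform form of slow variation and with the exact parity count of admissible $k$ (so that the $2$ from the local limit theorem is correctly matched against the density of admissible indices), and only then is the passage $\varepsilon\to0$, $K\to\infty$ a routine interchange of limits licensed by the two tail bounds.
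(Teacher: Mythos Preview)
Your proposal is correct and follows essentially the same strategy as the paper: feed the strong renewal asymptotic (\ref{c_kRel}) and the local CLT for the simple random walk into the identity $G_{\psi}(x)=\sum_{k\ge1}p(k,x)C(k)$, then use Karamata/regular-variation arguments to identify the constant. The only organisational difference is that the paper uses a \emph{single} cutoff at $k=\Vert x\Vert^{2}/A$ and invokes Lawler's quantitative LCLT error bound $|p(k,x)-q(k,x)|\le c\Vert x\Vert^{-2}k^{-d/2}$ on the entire range $k>\Vert x\Vert^{2}/A$ (so the large-$k$ tail is absorbed into the error term $I_{12}$ rather than cut off separately), whereas you use a three-window split with two parameters $\varepsilon,K$; both lead to the same limit and the same constant.
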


\begin{proof}
Remember that $p(k,x)$ is the $k$-step transition probability of the simple random walk started at 0.  
Since $p(k,x)=0$\ for\ $k< \frac{\Vert x\Vert}{\sqrt {d}}$, we have
\begin{align*}
G_{\psi }(x)&=\sum _{k\geq \frac{\Vert x\Vert }{\sqrt{d}}} C(k)\, p(k,x) \\
&=\underbrace{\sum _{k> \frac{\Vert x\Vert ^2}{A}}\! \! \! C(k)\, p(k,x)}_{=I_1}\ +\underbrace{\sum _{\frac{\Vert x\Vert }{\sqrt {d}}\leq k\leq \frac{\Vert x\Vert ^2}{A}}\! \! \! \! \! \! \! C(k)\, p(k,x)}_{=I_2} ,
\end{align*}
where\ $A>1$\ is a constant which will be specified later.

Our further analysis is based on the results of G.F.~Lawler \cite[%
Section 1.2]{Lawler}. We write $n\leftrightarrow x$ when $n+x_{1}+...+x_{d}$ is \emph{even}. 
Set
\begin{align*}
q (n,x)=2\Big( \frac{d}{2\pi n}\Big) ^{\frac{d}{2}}e^{-\frac{d\Vert x\Vert^{2}}{2n}}
\end{align*}
and define the error function 
\begin{equation*}
E(n,x)=\left\{ 
\begin{array}{ccc}
p(n,x)-q(n,x) & \text{if} & n\leftrightarrow x, \\ 
0 & \text{if} & n\nleftrightarrow x.%
\end{array}%
\right. 
\end{equation*}%
By \cite[Theorem 1.2.1]{Lawler}, 
\begin{equation}
\left\vert E(k,x)\right\vert \leq c_{1}\Vert x\Vert ^{-2}k^{-d/2},\label{lclt}
\end{equation}%
for some $c_{1}>0$ and all $k\geq 1$.

To study $I_1$ we may assume that $x\leftrightarrow 0$, then $p(2k+1,x)=0,$ for all $k\geq 1.$
Writing $I_1$ in the form,
\begin{gather*}
I_1=\underbrace{\sum _{2k> \frac{\Vert x\Vert ^2}{A}}\! \! \! C(2k)\, q (2k,x)}_{I_{11}}\  + \underbrace{\sum _{2k> \frac{\Vert x\Vert ^2}{A}}\! \! \! C(2k)\, E(2k,x)}_{I_{12}} 
\end{gather*}
and using\ (\ref{c_kRel}) and\ (\ref{lclt}) we obtain 
\begin{align}
I_{12}&\leq c_2\sum _{k> \frac{\Vert x\Vert ^2}{A}}\! \! \! k^{\alpha /2-1}\, l(k\, ) \frac{k^{-d/2}}{\Vert x\Vert ^2} \nonumber \\ 
 &\sim  c_2
 \int _{\frac{\Vert x\Vert ^2}{A}}^{\infty}\ t^{\alpha /2-d/2-1}\, l(t)\, \ud t   \quad \textrm{as}\ x\rightarrow \infty ,\label{a.s.g7}
\end{align}
for some constant $c_2>0$.
By \cite[Proposition 1.5.10]{Bingham}, 
\begin{align*}
 \int _{\frac{\Vert x\Vert ^2}{A}}^{\infty}\ t^{\alpha /2-d/2-1}\, l(t)\, \ud t \sim  
 \frac{2}{d-\alpha}\, A^{\frac{d-\alpha }{2}}\, \Vert x\Vert ^{\alpha -d}\, l( \Vert x\Vert ^2)\quad \textrm{as}\ x\rightarrow \infty .
\end{align*}
It follows that
\begin{gather*}
\lim _{\Vert x \Vert \rightarrow \infty}\, \frac{\Vert x \Vert ^{d-\alpha}}{l(\Vert x \Vert ^2)}I_{12}=0 .
\end{gather*}
Similarly, when $\Vert x\Vert \rightarrow \infty$,
\begin{align*}
I_{11}&\sim \frac{2\Big ( \frac{d}{2\pi}\Big) ^{d/2}}{\Gamma \big( \frac{\alpha}{2}\big)}\sum _{2k> \frac{\Vert x\Vert ^2}{A}}\! \! \! (2k)^{\alpha /2-1}\, l(2k)\, (2k)^{-d/2}\, \exp \left\{ \frac{-d\Vert x\Vert ^2}{4k} \right\} \\
 &\sim  \frac{\Big ( \frac{d}{2\pi}\Big) ^{d/2}}{\Gamma \big( \frac{\alpha}{2}\big)}\int _{\frac{\Vert x\Vert ^2}{A}}^{\infty}\, t^{\alpha /2-d/2-1}\, \exp\left\{ \frac{-d\Vert x\Vert ^2}{2t} \right\}\, l(t)\, \ud t . 
\end{align*}
Applying \cite[Proposition 4.1.2]{Bingham} we obtain
\begin{gather*}
I_{11}\sim  \Big( \frac{d}{2}\Big)^{\alpha /2} \frac{\, \, \pi ^{-d/2}}{\Gamma \big( \frac{\alpha}{2}\big) }\, \Vert x\Vert^{\alpha -d}\, l(\Vert x\Vert ^2)\int _{0}^{Ad/2}\, s^{d/2-\alpha /2-1}\, e^{-s}\, \ud s .
\end{gather*}
It follows that
\begin{gather*}
\lim _{\Vert x \Vert \rightarrow \infty}\, \frac{\Vert x \Vert ^{d-\alpha}}{l(\Vert x \Vert ^2)}I_{11}= \Big( \frac{d}{2}\Big)^{\alpha /2} \frac{\, \, \pi ^{-d/2}}{\Gamma \big( \frac{\alpha}{2}\big) }\int _{0}^{dA/2}\, s^{d/2-\alpha /2-1}\, e^{-s}\, \ud s:=C_1(A).\label{a.s.g11}
\end{gather*}
\par To estimate\ $I_2$ we use the Gaussian upper bound from \cite[Theorem 2.1]{Hebisch},
\begin{align*}
I_2&\leq  c_3 \sum _{\frac{\Vert x\Vert}{\sqrt {d}}\leq k\leq \frac{\Vert x\Vert^2}{A}}\! \! \! \! \! \! \! k^{\alpha /2-1}\, l(k)\, k^{-d/2}\, \exp \left\{ \frac{-\Vert x\Vert^2}{c_4k}\right\}  \\
 &\sim  c_3\int _{\frac{\Vert x\Vert}{\sqrt {d}}}^{\frac{\Vert x\Vert^2}{A}} t^{\alpha /2-d/2-1}\, \exp \left\{ \frac{-\Vert x\Vert^2}{c_4t}\right\}\, l(t)\, \ud t \\
 &= \frac{c_3}{c_4^{\alpha /2-d/2}}\, \Vert x\Vert^{\alpha -d}\int _{\frac{A}{c_4}}^{\frac{\sqrt {d}\Vert x\Vert}{c_4}} s^{d/2-\alpha /2-1}\, e^{-s}\, l\Big(\frac{\Vert x\Vert ^2}{c_4 s} \Big)\, \ud s  \\
 &\leq   \frac{c_3}{c_4^{\alpha /2-d/2}}\, \Vert x\Vert^{\alpha -d}\int _{\frac{A}{c_4}}^{\infty} s^{d/2-\alpha /2-1}\, e^{-s}\, l\Big(\frac{\Vert x\Vert ^2}{c_4 s} \Big)\, \ud s  ,
\end{align*}
for some constants $c_3, c_4>0$.
Next we apply \cite[Theorem 1.5.6]{Bingham} and the Dominated Convergence Theorem 
\begin{gather*}
\limsup _{\Vert x\Vert\rightarrow \infty }\, \frac{\Vert x\Vert^{d-\alpha}}{l(\Vert x\Vert ^2)}\, I_2\leq  \frac{c_3}{c_4^{\alpha /2-d/2}\, \Gamma \big( \frac{\alpha}{2}\big)}\int _{\frac{A}{c_2}}^{\infty } s^{d/2-\alpha /2-1}\ e^{-s}\, \ud s := C_2(A).
\end{gather*}
All the above show that, for any fixed $A>1$,
\begin{align*}
\limsup _{\Vert x\Vert\rightarrow \infty }\, \frac{\Vert x\Vert^{d-\alpha}}{l(\Vert x\Vert ^2)}\, G_\psi (x)\leq  C_1(A)+C_2(A)  
\end{align*}
and
\begin{align*}
\liminf _{\Vert x\Vert\rightarrow \infty }\, \frac{\Vert x\Vert^{d-\alpha}}{l(\Vert x\Vert ^2)}\, G_\psi (x)&\geq  C_1(A) .
\end{align*}
At last, we have
\begin{align*}
\lim _{A\rightarrow \infty }\, C_2(A)=0
\end{align*}
and
\begin{gather*}
\lim _{A\rightarrow \infty }\, C_1(A)= \Big( \frac{d}{2}\Big)^{\alpha /2} \frac{\, \, \pi ^{-d/2}}{\Gamma \big( \frac{\alpha}{2}\big) }\, \Gamma \Big( \frac{d-\alpha}{2}\Big) . \label{c1a2}
\end{gather*}
The proof is finished.
\end{proof}
\begin{rem}
One useful observation is that if we assume that the function $\psi$ satisfies
\begin{equation*}
\psi (\theta )\asymp \theta ^{\alpha /2}/l(1/\theta )\quad \text{at}\ 0
\end{equation*}%
and belongs to the class $\mathcal{SBF}$,
then following the line of reasons of Lemma \ref{Lemma111} and Theorem \ref{Asymptotic}, we obtain that
\begin{align*}
C(k)\asymp k^{\alpha /2-1}l(k)\quad \text{at}\ \infty
\end{align*} 
and 
\begin{equation*}
G_{\psi }(x)\asymp \Vert x\Vert ^{\alpha -d}l(\Vert x\Vert ^{2})\quad \text{%
at}\ \infty .
\end{equation*}
Whether this is true when $\psi\in \mathcal{BF} \setminus \mathcal{SBF}$ is an open question at
the present writing. In the closely related paper \cite{Molchanov} some partial results in this direction are obtained.
\end{rem}


\section{Massive sets}

\subsection*{Basic definitions.}

Let $X=\{X(n)\}_{n\geq 0}$ be a transient random walk on $\mathbb{Z} ^d$.
Let $B$ be a proper subset of $\mathbb{Z} ^d$ and $p _B $ the hitting
probability of $B$. The set $B$ is called \textit{massive}/\textit{recurrent} if $p _B (x)=1$\
for all\ $x\in \mathbb{Z} ^d$\ and \textit{non-massive} otherwise.

Let $\pi _B (x)$ be the probability that the random walk $X$ starting from $%
x $ visits the set $B$ infinitely many times. The set $B$ is massive if and
only if $\pi _B \equiv 1$; for non-massive $B$, $\pi _B $ is identically $0$.

Let $G(x,y)$ be the Green function of $X$. In general, the function $p _B$
is excessive, whence it can be written in the form 
\begin{gather*}
p _B=G \varrho _B + \pi _B .
\end{gather*}
When $B$ is a non-massive set, i.e. $\pi _B\equiv 0$, $p_B$ is a potential.
It is called the \textit{equilibrium potential} of $B$, respectively $\varrho
_B $ - the \textit{equilibrium distribution}. When $B$ is non-massive, the 
\textit{capacity} of $B$ is defined as 
\begin{align*}
Cap (B)=\sum _{y\in B}\varrho _B (y).
\end{align*}
The quantity $Cap(B)$ can be also computed as 
\begin{gather*}
Cap (B)=\sup \, \{\, \sum _{y\in B}\varrho (y):\, \varrho \in \Xi _B\} ,
\end{gather*}
where 
\begin{align*}
\Xi _B=\{\varrho \geq 0 : \text{supp}\, \varrho \subset B\ \text{and}\ G \varrho \leq
1\}.
\end{align*}
For all of this we refer to \textsc{spitzer} \cite[Chapter VI]{Spitzer}.


\subsection*{Test of massiveness}

Assume that the Green function $G(x)$ is of the form: 
\begin{align}
G(x)= \frac{a(x)}{\chi (\Vert x\Vert)},\qquad x\neq 0,  \label{assump2}
\end{align}
where $\chi$ is a non-decreasing function satisfying the doubling condition 
\begin{align}
\chi (2 \theta)\leq C\chi (\theta),\quad \text{for all}\ \theta >0\ \text{%
and some}\ C>1 ,  \label{Delta2}
\end{align}
and $c_1\leq a(x)\leq c_2$ for some $c_1,c_2>0$ uniformly in $x$.

For a set $B$ define the following sequence of sets 
\begin{align*}
B_k=\{x\in B : 2^k\leq \Vert x\Vert <2^{k+1}\},\quad k=0,1,\ldots .
\end{align*}

\begin{thm}
\label{Wiener} A set\ $B$\ is non-massive if and only if 
\begin{gather*}
\sum _{k=0}^\infty \frac{Cap(B_k)}{\chi (2^k)} <\infty .  \label{a.w.t.1}
\end{gather*}
\end{thm}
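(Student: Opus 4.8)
The plan is to reduce the statement to a classical Wiener-type criterion for transient random walks (or Markov chains) with Green function comparable to a regularly/doubling-varying profile, in the spirit of the Itô–McKean treatment of thorns and of Spitzer's potential theory. The starting point is the two standard inequalities relating the capacity of a union to the sum of the capacities of the pieces: the subadditivity $Cap(B) \le \sum_k Cap(B_k)$, which is immediate from the variational characterisation $Cap(B) = \sup\{\sum_{y\in B}\varrho(y):\varrho\in\Xi_B\}$, and a quasi-superadditivity bound asserting that $\sum_k Cap(B_k)$ is, up to a constant depending only on the doubling constant $C$ in \eqref{Delta2}, controlled by the capacity of a suitable enlargement of $B$. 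The heart of the matter is to convert statements about hitting probabilities into statements about capacities of the annular pieces $B_k$, using \eqref{assump2}: on the annulus $\{2^k\le \|x\|<2^{k+1}\}$ the Green function is comparable to $1/\chi(2^k)$, so $Cap(B_k)/\chi(2^k)$ measures, up to uniform constants, the probability that the walk started far away (or from a point at distance $\asymp 2^k$ from the origin) ever hits $B_k$.

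First I would prove the "if" direction: assume $\sum_k Cap(B_k)/\chi(2^k)<\infty$ and show $B$ is non-massive, equivalently $\pi_B\equiv 0$. Decompose the event that $X$ returns to $B$ infinitely often according to which annulus is visited; by Borel–Cantelli-type reasoning it suffices to show $\sum_k \gP_x(X \text{ ever hits } B_k)<\infty$ for every $x$. Using the representation $p_{B_k} = G\varrho_{B_k}$ valid because each $B_k$ is finite hence non-massive, together with the two-sided bound $c_1/\chi(\|x-y\|)\le G(x,y)\le c_2/\chi(\|x-y\|)$ and the doubling property of $\chi$, one estimates $\gP_x(X \text{ hits } B_k)=\sum_{y\in B_k}G(x,y)\varrho_{B_k}(y)\asymp Cap(B_k)/\chi(2^k)$ for $\|x\|$ not comparable to $2^k$, and crudely bounds the finitely many remaining annuli. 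Summability then forces $\pi_B(x)=0$, so $B$ is non-massive. For the "only if" direction, I would argue contrapositively: if $\sum_k Cap(B_k)/\chi(2^k)=\infty$, split $\gN$ into the even-indexed and odd-indexed annuli; at least one sub-sum diverges, and the corresponding union $B' = \bigcup B_{2k}$ (say) consists of pieces living in geometrically separated shells. For such well-separated sets a lower bound $Cap\big(\bigcup_{k\le N} B_{2k}\big) \ge c\,\sum_{k\le N} Cap(B_{2k})$ holds with $c$ depending only on $C$ — this is the quasi-superadditivity step, proved by patching together the equilibrium measures of the individual $B_{2k}$ and using that the Green-potential cross terms between different shells are summable and uniformly small because of doubling. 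Letting $N\to\infty$ gives $Cap(B')=\infty$, and an infinite-capacity set is massive (its equilibrium potential cannot be a potential), hence $B\supset B'$ is massive.

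The main obstacle I anticipate is the quasi-superadditivity estimate $\sum_k Cap(B_k)/\chi(2^k) \le C'\, \gP_x(\text{hit }B)$ (equivalently the lower bound on $Cap$ of a union of well-separated annular pieces). Subadditivity is free, but the reverse requires controlling the interaction between the equilibrium measures on different annuli: when one superimposes the measures $\varrho_{B_k}$, the potential $G(\sum_k \varrho_{B_k})$ picks up cross terms $\sum_{k\neq j}\int G(x,y)\,\varrho_{B_j}(\ud y)$ for $x$ near the shell $2^k$, and one must show these are bounded by a geometric series in $|k-j|$ — this is exactly where the doubling condition \eqref{Delta2} and the spatial separation of consecutive annuli enter, and where the argument for a general doubling $\chi$ is more delicate than in the polynomial case $\chi(\theta)=\theta^{d-\alpha}$ treated by Itô–McKean. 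A clean way to organise this is to work not with all annuli at once but, as above, with the thinned families $\{B_{2k}\}$ and $\{B_{2k+1}\}$, so that consecutive retained shells differ by a factor $4$ in scale; then $\sum_{y\in B_{2j}} G(x,y)\le c_2\,Cap(B_{2j})/\chi(2^{2j})\le c_2\,(\text{const})\,\chi(2^{2k})^{-1}\chi(2^{2j})^{-1}\cdot\chi(2^{2k})$ and doubling makes $\sum_j \chi(2^{2k})/\chi(2^{2j})$ over $j\neq k$ a convergent geometric-type sum, delivering the required uniform bound. Once this is in hand, both directions follow from the hitting-probability/capacity dictionary and the elementary fact that finite capacity is equivalent to non-massiveness.
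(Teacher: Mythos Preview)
The paper does not spell out a proof but points to Spitzer, Section~26, T1, and claims that the argument there goes through verbatim once one uses the hypotheses (\ref{assump2}) and (\ref{Delta2}). Your ``if'' direction (convergence implies non-massive) is the standard first Borel--Cantelli reduction and is essentially correct.

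Your ``only if'' direction, however, has a genuine gap. The key assertion ``an infinite-capacity set is massive'' is \emph{false}. A counterexample lies inside the paper itself: take $S_\alpha$ on $\gZ^3$ with $0<\alpha<2$ and let $A=\gZ_+\times\{0\}\times\{0\}$. By Example~3.2, $A$ is \emph{non}-massive; yet $Cap_\alpha(A\cap\{1,\dots,N\})\asymp N\to\infty$, since placing unit charge at each of the $N$ lattice points yields a potential bounded by $G_\alpha(0)+2\sum_{k\ge 1}k^{\alpha-3}<\infty$. So producing $Cap(B')=\infty$ tells you nothing about massiveness. Your cross-term estimate is also incorrect. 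For $x$ in shell $2j$ and the equilibrium measure $\varrho_{B_{2k}}$ with $k>j$, one has $G\varrho_{B_{2k}}(x)\asymp Cap(B_{2k})/\chi(2^{2k})$; summing over $k>j$ reproduces exactly the divergent tail of the Wiener series, not a geometric series controlled by doubling. (In your displayed chain of inequalities the right-hand side simplifies to a constant times $1/\chi(2^{2j})$, i.e.\ you are tacitly assuming $Cap(B_{2j})\le\text{const}$, which is unjustified; and the sum $\sum_{j<k}\chi(2^{2k})/\chi(2^{2j})$ has every term $\ge 1$, so it cannot converge.)

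The route the paper invokes (Spitzer's) works with hitting probabilities directly, not with the capacity of the union. One shows that $p_{\,\cup_{k\ge N}B_k}(x)\ge c>0$ uniformly in $N$ --- typically via a second-moment (dependent Borel--Cantelli) argument applied to the events ``hit $B_k$ after first reaching distance of order $2^k$'' --- and then concludes from $\pi_B(x)=\lim_N p_{\,\cup_{k\ge N}B_k}(x)$ together with the dichotomy $\pi_B\in\{0,1\}$. The doubling condition (\ref{Delta2}) enters in comparing $G(x,y)$ across adjacent shells in order to control the \emph{correlations} between these hitting events, which is where it is genuinely needed; it is not a device for bounding the potential of a superposed equilibrium measure.
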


To prove this statement, crucial in fact in our study, we use the
assumptions (\ref{assump2}) and (\ref{Delta2}) and follow step by step the
classical proof by Spitzer \cite[Section 26, T1]{Spitzer}.

\begin{exa}
Let $S$ be the simple random walk in $\gZ ^3$. The set $B=\mathbb{Z} _+ \times
\{0\}\times \{0\}$ is $S$-massive. Moreover, its proper subset $\mathcal{P}
\times \{0\}\times \{0\} $, where $\mathcal{P}$ is the set of primes, is
massive, see \cite{McKean}, \cite{McKean1}.

Let $0<\alpha <2$ and $S_\alpha$ be the $\alpha$-stable random walk in $\gZ ^3$. We claim that the set $B=\mathbb{Z} _+ \times \{0\}\times
\{0\}$ is not massive. To prove the claim we apply Theorem \ref{Wiener}
with $\chi (\theta)=\theta ^{3-\alpha}$. Let $|B_k|$ be the cardinality of\ $%
B_k$. Since $Cap(B_k)\leq |B_k|$, we have 
\begin{align*}
\sum _{k=0}^\infty \frac{Cap (B_k)}{\chi (2^{k+1})}&\leq \sum _{k=0}^\infty 
\frac{|B_k|}{2^{(k+1)(3-\alpha)}} \\
&\leq \sum _{k=0}^\infty \, \sum _{n:\, (n,0,0) \in B_k} \frac{1}{%
n^{3-\alpha}}= \sum _{n=1}^{\infty}\frac{1}{n^{3-\alpha}}<\infty .
\end{align*}
\end{exa}

\begin{exa}
Let $B$ be the hyperplane $\{ x\in \mathbb{Z} ^d:\, x_1=0\}$, $d\geq 3$. We
claim that

\begin{itemize}
\item[(i)] \ If\ $0< \alpha <1$, then\ $B$\ is a non-massive set with
respect to $S_\alpha$;

\item[(ii)] \ If\ $1\leq \alpha \leq 2$, then $B$ is a massive set with
respect to $S_\alpha$.
\end{itemize}
\end{exa}

Let $s_\alpha (n)$ be the projection of $S_\alpha (n)$ on the $x_1$-axis.
Evidently the set $B$ is $S_\alpha$-massive if and only if the random walk $%
\{s_\alpha (n)\}$ is reccurent. The characteristic function of the random
variable $S_{\alpha}(1)$ is 
\begin{align*}
H_\alpha (\theta)=1-(1-\frac{1}{d}\sum _{j=1}^{d}\cos \theta _j)^{\alpha
/2},\quad \theta \in \mathbb{R} ^d .
\end{align*}
It follows that the characteristic function $h_{\alpha}(\xi)$ of $s_\alpha
(1)$ is 
\begin{align*}
h_{\alpha}(\xi)=1-d^{-\alpha /2}(1-\cos \xi)^{\alpha /2},\quad \xi \in 
\mathbb{R} .
\end{align*}
Let $p(n)$ be the probability of return to $0$ in $n$ steps defined by the
random walk $\{ s_\alpha (n)\}$, then taking the inverse Fourier transform
we obtain 
\begin{gather*}
p(n)=\frac{1}{2\pi}\int _{-\pi}^\pi \big(h_{\alpha} (\xi)\big)^n\mathrm{d}
\xi .
\end{gather*}
It follows that 
\begin{align*}
\sum _{n\geq 0}p(n)&=\frac{1}{2\pi}\int _{-\pi}^\pi \frac{\mathrm{d} \xi}{%
1-h_{\alpha}(\xi)} \asymp \int _{0}^1 \frac{\mathrm{d} \xi}{\xi ^{\alpha }}
<\infty
\end{align*}
if and only if $0<\alpha <1$. By the well known criterion of transience, $%
s_\alpha (n)$ is transient. 

\section{Thorns}

In this section we assume that the dimension $d$ of the lattice $\mathbb{Z}
^d$ satisfies $d\geq 3$. For $x=(x_1,\ldots ,x_{d-1},x_d)$ we set $%
x^{\prime}=(x_1,\ldots ,x_{d-1})$ and write $x=(x^{\prime}, x_d)$. The 
\textit{thorn} $\mathcal{T}$ is defined as 
\begin{align*}
\mathcal{T}=\{ (x^{\prime}, x_d)\in \mathbb{Z} ^d: \Vert x^{\prime}\Vert
\leq t(x_d),\, x_d\geq 1 \},
\end{align*}
where $t(n)$ is a non-decreasing sequence of positive numbers. We study $S_\alpha$-massiveness of $\mathcal{T}$.

The problem of massiveness of thorns with respect to the simple random walk was studied in \textsc{It%
ô and McKean} \cite{McKean}. 
When $d=3$ the thorn $%
\mathcal{T}$ is $S$-massive, because the straight line is $S$-massive.
Whence for the simple random walk one assumes that $d\geq 4$.

By $Cap_\alpha (B)$ we denote the $S_\alpha$-capacity of the set $B\subset 
\mathbb{Z} ^d$, whereas $\widetilde{Cap_\alpha} (A)$ stands for the capacity
of the set $A\subset \mathbb{R} ^d$, associated with the rotationally invariant $\alpha$%
-stable process. 

\begin{prop}
\label{propMassive} Assume that 
$\limsup _{n\rightarrow \infty} t(n)/n = \delta >0,$
then the thorn $\mathcal{T}$ is $S_\alpha$-massive for any $0< \alpha < 2$
and $d\geq 3$.
\end{prop}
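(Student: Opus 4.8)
\emph{Strategy of the proof.} The plan is to reduce the claim to the Wiener‑type criterion of Theorem~\ref{Wiener} and, eventually, to a lower bound on the capacity of a ball. First I would check that $S_\alpha$ fits the framework of Section~3: for $\psi=\psi_\alpha$ we have $\psi(0)=0$, $\psi(1)=1$ and $\psi(\lambda)=\lambda^{\alpha/2}$, so (\ref{rvcbf1}) holds with $l\equiv 1$; moreover $0<\alpha<2\le d-1<d$ and $\psi_\alpha\in\mathcal{SBF}$, so Lemma~\ref{Lemma111}(ii) gives (\ref{c_kRel}) and Theorem~\ref{Asymptotic} yields $G_\alpha(x)\sim C_{d,\alpha}\Vert x\Vert^{\alpha-d}$ as $x\to\infty$, where $G_\alpha$ denotes the Green function of $S_\alpha$. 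In particular $S_\alpha$ is transient and $G_\alpha$ has the form (\ref{assump2}) with $\chi(\theta)=\theta^{d-\alpha}$, which is non‑decreasing and satisfies the doubling condition (\ref{Delta2}) because $d-\alpha>0$. Writing $\mathcal{T}_k=\{x\in\mathcal{T}:2^k\le\Vert x\Vert<2^{k+1}\}$, Theorem~\ref{Wiener} then shows that it suffices to prove
\[
\sum_{k\ge 0}\frac{Cap_\alpha(\mathcal{T}_k)}{2^{k(d-\alpha)}}=\infty .
\]

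Next I would use the hypothesis $\delta>0$ together with the monotonicity of $t$ to place large balls inside $\mathcal{T}$. Choose $n_j\uparrow\infty$ with $t(n_j)\ge\frac{\delta}{2}n_j$. Since $t$ is non‑decreasing, $t(m)\ge\frac{\delta}{2}n_j$ for every integer $m\ge n_j$, hence the solid cylinder $\{(x',x_d):\Vert x'\Vert\le\frac{\delta}{2}n_j,\ n_j\le x_d\le 2n_j\}$ is contained in $\mathcal{T}$. A routine choice of centre and a fixed shrinking of the radius then produce a lattice ball $B(y_j,\rho_j)\subset\mathcal{T}$ with $\rho_j\asymp\Vert y_j\Vert\asymp n_j$ which, moreover, lies entirely in $\mathcal{T}_{k_j}$ for the integer $k_j$ determined by $2^{k_j}\le\Vert y_j\Vert<2^{k_j+1}$, and with $\rho_j\ge c(\delta)\,2^{k_j}$. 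Passing to a subsequence, we may assume the $k_j$ are strictly increasing.

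The main point — and the only step that is not bookkeeping — is the lower bound
\[
Cap_\alpha\big(B(y,\rho)\big)\ \ge\ c\,\rho^{\,d-\alpha},\qquad \rho\ge 1,
\]
with $c=c(d,\alpha)>0$. I would obtain it from the variational description $Cap_\alpha(B)=\sup\{\sum_z\varrho(z):\varrho\ge 0,\ \mathrm{supp}\,\varrho\subset B,\ G_\alpha\varrho\le 1\}$ recalled in Section~3, by testing with the normalised uniform charge $\varrho=\varepsilon\rho^{-\alpha}\mathbf{1}_{B(y,\rho)}$. Using $G_\alpha(w)\asymp\Vert w\Vert^{\alpha-d}$ for $w\neq 0$ together with the elementary estimate $\sum_{1\le\Vert w\Vert\le R}\Vert w\Vert^{\alpha-d}\asymp R^{\alpha}$, one checks, splitting $x$ according to whether it lies in $B(y,\rho)$, within distance $\rho$ of it, or further away, that $G_\alpha\varrho(x)\le C_0\varepsilon$ uniformly in $x$; taking $\varepsilon=1/C_0$ makes $\varrho$ admissible, with $\sum_z\varrho(z)=\varepsilon\rho^{-\alpha}|B(y,\rho)|\asymp\rho^{d-\alpha}$. (The same bound can alternatively be read off from a comparison of $Cap_\alpha$ with the capacity $\widetilde{Cap_\alpha}$ of the rotationally invariant $\alpha$‑stable process in $\gR^d$, for which a Euclidean ball of radius $\rho$ has capacity $\asymp\rho^{d-\alpha}$ by scaling.) Working here with a genuinely $d$‑dimensional ball, rather than with a cross‑sectional disc of $\mathcal{T}$, is what makes the argument succeed uniformly for all $0<\alpha<2$.

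Finally, monotonicity of capacity gives $Cap_\alpha(\mathcal{T}_{k_j})\ge Cap_\alpha(B(y_j,\rho_j))\ge c\,\rho_j^{\,d-\alpha}\ge c'\,2^{k_j(d-\alpha)}$, so $\sum_{k\ge 0}Cap_\alpha(\mathcal{T}_k)2^{-k(d-\alpha)}\ge\sum_j c'=\infty$, and Theorem~\ref{Wiener} lets us conclude that $\mathcal{T}$ is $S_\alpha$‑massive. I expect the ball capacity estimate to be the only delicate ingredient; the geometric reduction uses nothing more than the monotonicity of $t$ and the assumption $\delta>0$.
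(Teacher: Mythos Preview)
Your proof is correct and follows the same route as the paper: invoke the Wiener test (Theorem~\ref{Wiener}) with $\chi(\theta)=\theta^{d-\alpha}$, inscribe a lattice ball of radius $\asymp 2^{k}$ in the annular piece $\mathcal{T}_{k}$ for infinitely many $k$ (using monotonicity of $t$ to pass from the hypothesis on $t(n)/n$ to the dyadic scale), and conclude from a lower bound $Cap_\alpha(B(y,\rho))\ge c\rho^{d-\alpha}$.

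The only noteworthy difference is how the ball capacity bound is obtained. The paper simply quotes inequality~(\ref{alphaCapBall}) from Corollary~\ref{CorBalls}, which is proved in Section~5 by comparing $Cap_\alpha$ with the capacity $\widetilde{Cap_\alpha}$ of the rotationally invariant $\alpha$-stable process (Theorem~\ref{CapContin}) and then invoking known continuous results. Your direct argument---testing the variational formula with the uniform charge $\varepsilon\rho^{-\alpha}\mathbf{1}_{B(y,\rho)}$ and using $G_\alpha(w)\asymp\Vert w\Vert^{\alpha-d}$---is more elementary and self-contained for this proposition, and avoids the discrete/continuous comparison machinery. On the other hand, the paper's comparison theorem is needed anyway for the harder Theorem~\ref{propThorns} (capacity of cylinders), so from the paper's point of view there is no extra cost in citing it here. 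Either way the argument is short; your version has the virtue of making Proposition~\ref{propMassive} independent of Section~5.
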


\begin{proof}
The sequence $t(n)$ is non-decreasing, whence by the assumption, $\limsup _{n\rightarrow \infty }\frac{t(2^n)}{2^n}\geq \delta /2$. Hence $t(2^n)/2^n >\delta /3$ for infinitely many $n$. For such 
$n$ consider the following sets 
\begin{align}
\mathcal{T}_n=\mathcal{T}\cap \{ x\in \mathbb{Z} ^d:\, 2^n\leq \Vert x\Vert
< 2^{n+1} \}.  \label{thornPart}
\end{align}
Let $B_n$ be the ball of radius $\delta 2^{n-2}$ centred at $(0,\ldots
,0,3\cdot 2^{n-1})$, see Figure 1. Since $t(2^n)>\delta /3\cdot 2^n>\delta 2^{n-2}$, we have $%
B_n\subset \mathcal{T}_n$, whence 
\begin{align*}
Cap_\alpha (\mathcal{T}_n)\geq Cap_\alpha (B_n).
\end{align*}
By the inequality (\ref{alphaCapBall}), Section 5, for some $c>0$, 
\begin{align*}
Cap_\alpha (B_n)\geq c2^{(n-2)(d-\alpha)}.
\end{align*}
It follows that 
\begin{align*}
\sum _{n\geq 0} \frac{Cap_\alpha (\mathcal{T}_n)}{2^{n(d-\alpha)}}=\infty .
\end{align*}
By Theorem \ref{Wiener}, the thorn $\mathcal{T}$ is massive.
\end{proof}
\begin{rem}\label{psi_thorn}
Using capacity bounds given in Section 5 and
following the same line of reasons as in the proof of Proposition \ref%
{propMassive}, we show that the thorn $\mathcal{T}$ satisfying $\limsup
t(n)/n>0$ is $S_\psi$-massive, for any special Bernstein function $\psi$ which satisfy the assumptions in Theorem \ref{Asymptotic}.
When $\lim t(n)/n=0$, $S_\psi$-massiveness of the thorn $\mathcal{T}$ is a
delicate question. In such a
generality this question is opened at present.
\end{rem}

\begin{figure}[h]
\begin{tikzpicture}[scale=0.8]
									\draw (0,0) circle [radius=0.02];
									\draw [densely dashed](0,0) ++(140:3) arc (140:40:3);
										\draw [densely dashed](0,0) ++(140:6) arc (140:40:6);
									\draw[thick] (1,0) to [out=90, in=230] (3,7);
									\draw[thick] (-1,0) to [out=90, in=310] (-3,7);
									\begin{scope}
										\clip (-1,0) to [out=90, in=310] (-3,7)-- (3,7) to [out=230, in=90] 																			(1,0)--(-1,0);
										\clip (0:3cm) -- (0:6cm)
										arc (0:180:6cm) -- (180:3cm)
										arc (180:0:3cm) -- cycle;
										\fill[color=black, opacity=0.3] (-5,0) rectangle (5,7); 
									\end{scope}
										
									\draw [fill=myGrey2, opacity=0.3, black, thick] (0,4.5) circle [radius=1.3];
										\draw (-0.1,2.7) node [right] {{\tiny $2^n$}};
										\draw (-0.1,6.3) node [right] {{\tiny $2^{n+1}$}};
										\draw [dashed](0,0) +(0:1)arc (0:180:1 and 0.3);		
										\draw (0,0)  +(180:1) arc (180:360:1 and 0.3);	
										\draw [dashed](0,7) +(0:3)arc (0:180:3 and 0.4);		
										\draw (0,7)  +(180:3) arc (180:360:3 and 0.4);			
										
											\draw (-0.1,4.5) node [right] {{\tiny $\frac{3}{2}2^{n}$}};
									\draw [thick](-0.05,4.5)--(0.05,4.5);
									\draw [dashed](-4,3)--(4,3);
										\draw (1.1,3.25) node [right] {{\tiny $t(2^n)$}};
										\draw [dashed](-4,6)--(4,6);
										\draw (2.4,6.3) node [right] {{\tiny $t(2^{n+1})$}};
												\draw[<-] [thick] (1.3, 5.5)--(2.8,8);
										\draw [fill=black, opacity=0.3] (2.8,8.4) ellipse  (1 and 0.4);
										\draw (2.8,8) node [above] {{\tiny Set\ $\mathcal{T}_n$}};
									\draw [dashed](0,0)--(0,8);
													\draw [densely dashed] (0,4.5) ellipse (1.3 and 0.3);
													\draw (0, 4.5) +(180:1.3) arc (180:270:1.3 and 0.3);
													\draw (0, 4.5) +(270:0.3) arc (270:360:1.3 and 0.3);
											\draw [black] (0,4.5) circle [radius=1.3];
											\draw[<-] [thick] (-0.5, 5)--(-2.8,8);
											\draw [fill=black, opacity=0.3] (-2.8,8.4) ellipse  (1 and 0.4);
											\draw [fill=myGrey2, opacity=0.3, black] (-2.8,8.4) ellipse  (1 and 0.4);
											\draw (-2.8,8) node [above] {{\tiny Ball\ $B_n$}};
							\end{tikzpicture}
\caption{Ball inscribed in the thorn.}
\end{figure}
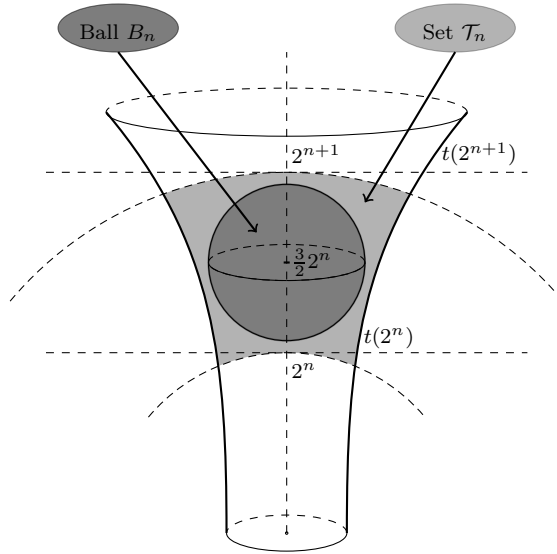
Next we study the case 
\begin{align}
\lim _{n\rightarrow \infty} \frac{t(n)}{n}=0.  \label{thinthorn}
\end{align}
Our reasons are based on the criterion of massiveness given in Theorem \ref%
{Wiener} but require more andvanced tools than those in the proof of
Proposition \ref{propMassive}. More precisely, we need upper and lower
bounds of the $\alpha$-capacity of non-spherically symmetric sets, long
cylinders for instance.

Let $\mathcal{F}_L$ be a cylinder of height $L$ with the unit disc as its
base, 
\begin{align*}
\mathcal{F}_L=\{ (x^{\prime},x_d)\in \mathbb{R} ^d\, :\, \Vert
x^{\prime}\Vert \leq 1,\, 0<x_d\leq L \}.
\end{align*}

\begin{prop}
\label{capCylinder} There exist constants $c_0, c_1>0$ which depend only on $%
d$ and $\alpha$ such that the following inequality holds 
\begin{align*}
c_0L\leq \widetilde{Cap_\alpha }(\mathcal{F}_L)\leq c_1L,\quad L\geq 1 .
\end{align*}
\end{prop}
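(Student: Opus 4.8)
The plan is to establish the two inequalities separately, in each case reducing the problem about the $\alpha$-stable capacity $\widetilde{Cap_\alpha}$ of the cylinder $\mathcal{F}_L$ to the capacity of a single unit-size set, and then using subadditivity (for the upper bound) or superadditivity of capacities of well-separated pieces (for the lower bound). Throughout I would use the standard representation $\widetilde{Cap_\alpha}(A)=\big(\sup\{\mu(A):\mathrm{supp}\,\mu\subset A,\ \widetilde G_\alpha\mu\le 1\}\big)$ together with the explicit Riesz kernel $\widetilde G_\alpha(x,y)\asymp\|x-y\|^{\alpha-d}$ of the rotationally invariant $\alpha$-stable process (valid since $0<\alpha<\min\{d,2\}\le d$), and the scaling property $\widetilde{Cap_\alpha}(r A)=r^{d-\alpha}\widetilde{Cap_\alpha}(A)$.

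\textbf{Upper bound.} First I would cover $\mathcal{F}_L$ by roughly $L$ congruent unit cubes (or unit balls) $Q_1,\dots,Q_N$, $N\le c\,L$, stacked along the $x_d$-axis; by subadditivity of capacity, $\widetilde{Cap_\alpha}(\mathcal{F}_L)\le\sum_{i=1}^N\widetilde{Cap_\alpha}(Q_i)=N\cdot\widetilde{Cap_\alpha}(Q_1)\le c_1 L$, since each $Q_i$ is a translate of a fixed bounded set of finite, positive $\alpha$-capacity. This direction is routine.

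\textbf{Lower bound.} Here I would exhibit a subprobability-type measure on $\mathcal{F}_L$ of total mass $\asymp L$ whose $\alpha$-potential is bounded by a constant, which by the variational characterization yields $\widetilde{Cap_\alpha}(\mathcal{F}_L)\ge c_0 L$. The natural candidate is to place, at each of the $\lfloor L\rfloor$ lattice heights $x_d=1,2,\dots$, the equilibrium measure $\varrho_i$ of a unit ball $B_i$ centred on the axis at that height (so $\varrho_i$ has mass $\asymp 1$ and $\widetilde G_\alpha\varrho_i\le 1$), and set $\mu=\sum_i\varrho_i$, so $\mu(\mathcal{F}_L)\asymp L$. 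The point is to check that the potential $\widetilde G_\alpha\mu(x)=\sum_i\widetilde G_\alpha\varrho_i(x)$ stays uniformly bounded: each term is $\le 1$, and for the sum one uses $\widetilde G_\alpha\varrho_i(x)\le c\,\mathrm{dist}(x,B_i)^{\alpha-d}\le c\,(|x_d-i|-1)^{\alpha-d}$ for $|x_d-i|\ge 2$, whence $\sum_i\widetilde G_\alpha\varrho_i(x)\le C+c\sum_{j\ge 1}j^{\alpha-d}$, which converges precisely because $d-\alpha>d-2\ge 1$. Rescaling $\mu$ by the resulting constant gives an admissible measure and the bound $\widetilde{Cap_\alpha}(\mathcal{F}_L)\ge c_0 L$.

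\textbf{Main obstacle.} The only genuinely delicate point is the convergence and uniform boundedness of the potential $\widetilde G_\alpha\mu$ in the lower-bound construction — one must control the contributions of the balls far from $x$ along the thin direction and simultaneously the near-field behaviour, using the one-dimensional summability $\sum_j j^{\alpha-d}<\infty$, which is exactly where the dimension restriction $d\ge 3$ (so $d-\alpha>1$) enters. Everything else — the covering argument, the scaling of capacity, and the estimate $\widetilde G_\alpha(x,y)\asymp\|x-y\|^{\alpha-d}$ — is standard potential theory for the $\alpha$-stable process.
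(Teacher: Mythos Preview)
Your proposal is correct and follows essentially the same route as the paper: subadditivity over unit-size pieces for the upper bound, and for the lower bound the sum of equilibrium measures of translated unit sets along the $x_d$-axis, with the potential controlled via $\widetilde G_\alpha\varrho_i(x)\lesssim \|x-ie_d\|^{\alpha-d}\wedge 1$ and the one-dimensional summability $\sum_{j\ge 1}j^{\alpha-d}<\infty$ (using $d-\alpha>1$ from $d\ge 3$, $\alpha<2$). The only cosmetic difference is that the paper uses the unit cylinder slices $D_i=\{\|x'\|\le 1,\ i-1\le x_d\le i\}$ rather than unit balls, but the argument is otherwise identical.
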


\begin{proof}
Indeed, for the upper bound we write $L=k+m$, where $k=[L]$ and $m=L-[L]$.
Then, for some $c_1>0$, 
\begin{align*}
\widetilde{Cap_\alpha }(\mathcal{F}_L)\leq k\widetilde{Cap_\alpha} (\mathcal{%
F}_{1})+\widetilde{Cap_\alpha} (\mathcal{F}_m)\leq c_1 L.
\end{align*}
To obtain the lower bound we define the following sets 
\begin{align*}
D_i=\{ (x^{\prime},x_d)\in \mathbb{R} ^d\, :\, \Vert x^{\prime}\Vert \leq
1,\, i-1\leq x_d\leq i \},\quad i\geq 1.
\end{align*}
Let $\mu _i$ be the equilibrium measure of $D_i$, i.e. $\mu _i(D_i)=%
\widetilde{Cap _\alpha} (D_i)$. We have 
\begin{align*}
\widetilde{G_\alpha} \mu _{i+1}(x)=\widetilde{G_\alpha} \mu _1(x-ie_d),
\end{align*}
where $\widetilde{G_\alpha}$ is the Green function associated with the
symmetric $\alpha$-stable process in $\mathbb{R} ^d$ and $e_d=(0,0,\ldots
,1) $. Without loss of generality we can assume that $L$ is an integer
number. Define the following measure 
\begin{align*}
\sigma =\mu _1+\ldots + \mu _L.
\end{align*}
Clearly $\sigma (\mathbb{R} ^d)=L\widetilde{Cap_\alpha} (D_1)$. We claim
that 
\begin{align}
\widetilde{G_\alpha} \sigma\leq K<\infty .  \label{lemmaI}
\end{align}
Indeed, we have $G_\alpha \mu _1(x)\leq 1$, for all $x$, and 
\begin{align*}
\lim _{\Vert x\Vert \rightarrow \infty }\Vert x\Vert ^{d-\alpha}\widetilde{%
G_\alpha} \mu _1(x)<C,
\end{align*}
for some constant $C>0$. It follows that 
\begin{align*}
\sum _{i>0}\widetilde{G_\alpha} \mu _1(x-ie_d)\leq C\sum _{i>0}\Vert
x-ie_d\Vert ^{\alpha -d} \wedge 1.
\end{align*}
Observe that the series above converges uniformly in $x$ which proves the
claim. The inequality (\ref{lemmaI}) in turn implies the lower bound 
\begin{align*}
\widetilde{Cap_\alpha} (F_L)\geq \sigma (F_L)/K=\frac{L}{K}\widetilde{%
Cap_\alpha} (D_1).
\end{align*}
The proof is finished.
\end{proof}

Define the following sets 
\begin{align*}
\mathcal{F}_n^-&= \{ (x^{\prime},x_d)\in \mathbb{R} ^d:\, \Vert
x^{\prime}\Vert \leq t(2^n),\ \frac{4}{3}2^n\leq x_d < \frac{3}{4}2^{n+1} \};
\\
\mathcal{F}_n^+&= \{ (x^{\prime},x_d)\in \mathbb{R} ^d:\, \Vert
x^{\prime}\Vert \leq t(2^{n+1}),\ \frac{3}{4}2^n\leq x_d<\frac{4}{3}2^{n+1}
\}; \\
\begin{split}
\qquad F_n^-=\mathcal{F}_n^-\cap \mathbb{Z} ^d\quad \text{and}\quad F_n^+=%
\mathcal{F}_n^+\cap \mathbb{Z} ^d.
\end{split}%
\end{align*}
Let $Q(b)$ be the cube $[0,1]^d$ centered at $b$. For any set $B\subset 
\mathbb{Z} ^d$, we denote by $\widetilde{B}$ the subset of $\mathbb{R} ^d$
defined as 
\begin{align}
\widetilde{B}=\bigcup _{b\in B}Q(b).  \label{tildeSet}
\end{align}

\begin{thm}
\label{propThorns} Under the assumption (\ref{thinthorn}), the thorn $%
\mathcal{T}$ is $S_\alpha$-massive if and only if the series 
\begin{align}
\sum _{n>0}\Big(\frac{t(2^n)}{2^n}\Big)^{d-\alpha -1}  \label{szereg1}
\end{align}
diverges.
\end{thm}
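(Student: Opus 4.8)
The plan is to apply the Wiener-type massiveness criterion of Theorem \ref{Wiener} with $\chi(\theta)=\theta^{d-\alpha}$, which is legitimate since Theorem \ref{Asymptotic} gives $G_\alpha(x)\asymp\|x\|^{\alpha-d}$ (here $l\equiv 1$) and $\theta^{d-\alpha}$ is non-decreasing and satisfies the doubling condition \eqref{Delta2}. Thus the thorn $\mathcal{T}$ is non-massive if and only if $\sum_n Cap_\alpha(\mathcal{T}_n)\big/2^{n(d-\alpha)}<\infty$, where $\mathcal{T}_n$ is the annular piece of the thorn defined in \eqref{thornPart}. So everything reduces to two-sided estimates of $Cap_\alpha(\mathcal{T}_n)$, and the reduction of the discrete problem to the continuous one: by the standard comparison between discrete $S_\alpha$-capacity and the continuous $\widetilde{Cap_\alpha}$ of the inflated set $\widetilde{B}$ from \eqref{tildeSet}, one has $Cap_\alpha(B)\asymp\widetilde{Cap_\alpha}(\widetilde{B})$ (uniformly, using the Green function asymptotics), so it suffices to estimate $\widetilde{Cap_\alpha}(\widetilde{\mathcal{T}_n})$.

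Next I would sandwich $\widetilde{\mathcal{T}_n}$ between the two explicit solid sets $\mathcal{F}_n^-$ and $\mathcal{F}_n^+$ introduced just before the theorem: the cut-offs $\tfrac43 2^n\le x_d<\tfrac34 2^{n+1}$ versus $\tfrac34 2^n\le x_d<\tfrac43 2^{n+1}$ are chosen precisely so that $\mathcal{F}_n^-\subset\widetilde{\mathcal{T}_n}\subset\mathcal{F}_n^+$ (up to constants and the unit-cube inflation), using that $t$ is non-decreasing and $\|x'\|\le t(x_d)$ on the thorn together with $\|x\|\asymp x_d$ on this annulus. Both $\mathcal{F}_n^\pm$ are cylinders of radius $t(2^n)$ (resp. $t(2^{n+1})$) and height of order $2^n$. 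By scaling of the $\alpha$-stable capacity, $\widetilde{Cap_\alpha}$ of a cylinder of radius $\rho$ and height $h$ equals $\rho^{d-\alpha}$ times the capacity of a cylinder of radius $1$ and height $h/\rho$; applying Proposition \ref{capCylinder} with $L=h/\rho\asymp 2^n/t(2^n)$ (which is $\ge 1$ for large $n$ under assumption \eqref{thinthorn}, after possibly discarding finitely many terms) gives
\begin{align*}
\widetilde{Cap_\alpha}(\mathcal{F}_n^\pm)\asymp t(2^n)^{d-\alpha}\cdot\frac{2^n}{t(2^n)}=2^n\,t(2^n)^{d-\alpha-1},
\end{align*}
where I have used $t(2^{n+1})\asymp t(2^n)$, which follows since $t$ is non-decreasing and $2^{n+1}/t(2^{n+1})\ge 2^n/t(2^n)$ forces $t(2^{n+1})\le 2 t(2^n)$.

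Combining, $Cap_\alpha(\mathcal{T}_n)\asymp 2^n\,t(2^n)^{d-\alpha-1}$, so the Wiener series becomes
\begin{align*}
\sum_{n>0}\frac{Cap_\alpha(\mathcal{T}_n)}{2^{n(d-\alpha)}}\asymp\sum_{n>0}\frac{2^n\,t(2^n)^{d-\alpha-1}}{2^{n(d-\alpha)}}=\sum_{n>0}\Big(\frac{t(2^n)}{2^n}\Big)^{d-\alpha-1},
\end{align*}
which matches \eqref{szereg1}. Hence $\mathcal{T}$ is non-massive iff \eqref{szereg1} converges, equivalently massive iff it diverges. The main obstacle I anticipate is the comparison $Cap_\alpha(B)\asymp\widetilde{Cap_\alpha}(\widetilde{B})$ between discrete and continuous capacities: it needs the Green function bound $G_\alpha(x)\asymp\|x\|^{\alpha-d}$ from Theorem \ref{Asymptotic} together with a careful check that the inflation by unit cubes distorts the set only by a bounded amount on each dyadic annulus, and that the comparison constants can be taken uniform in $n$. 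The other delicate point, the requirement $2^n/t(2^n)\ge 1$ needed to invoke Proposition \ref{capCylinder} for a genuinely long cylinder, is exactly guaranteed by the standing assumption \eqref{thinthorn} $t(n)/n\to 0$, which also ensures $t(2^n)/2^n\to 0$ so the summand in \eqref{szereg1} is well-defined and the exponent $d-\alpha-1>0$ is genuinely positive since $d\ge 3$ and $\alpha<2$.
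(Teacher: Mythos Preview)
Your approach is essentially the paper's: apply the Wiener test with $\chi(\theta)=\theta^{d-\alpha}$, sandwich the annular thorn pieces between the cylinders $\mathcal{F}_n^\pm$, rescale and invoke Proposition~\ref{capCylinder}, and pass between discrete and continuous capacity via Theorem~\ref{CapContin}. The paper does the sandwiching in the discrete world first ($F_n^-\subset\mathcal{T}_n\subset F_n^+$ with $F_n^\pm=\mathcal{F}_n^\pm\cap\mathbb{Z}^d$) and only then compares capacities, whereas you compare first and sandwich $\widetilde{\mathcal{T}_n}$; either order works.

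There is, however, one genuine slip. Your claim $t(2^{n+1})\asymp t(2^n)$ is not justified: you assert ``$2^{n+1}/t(2^{n+1})\ge 2^n/t(2^n)$'', i.e.\ that $n\mapsto n/t(n)$ is non-decreasing, but the hypotheses say only that $t$ is non-decreasing and $t(n)/n\to 0$. Neither implies any doubling of $t$; one can build non-decreasing step functions $t$ with $t(n)/n\to 0$ for which $t(2^{n+1})/t(2^n)$ is unbounded along a subsequence. The paper never needs this comparison. For the upper bound it keeps the radius $t(2^{n+1})$ of $\mathcal{F}_n^+$ and arrives at
\[
\sum_{n>0}\frac{Cap_\alpha(\mathcal{T}_n)}{2^{n(d-\alpha)}}\le C\sum_{n>0}\Big(\frac{t(2^{n+1})}{2^{n+1}}\Big)^{d-\alpha-1},
\]
which is simply an index shift of \eqref{szereg1} and hence converges or diverges with it; the lower bound already uses $t(2^n)$ directly. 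Replace your doubling step by this index-shift observation and the argument is complete.
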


Before embarking on the proof of Theorem \ref{propThorns} we illustrate the
statement by the following example. Consider the thorn $\mathcal{T}$ with $%
t(n)=n/\left(\log (1+n)\right)^\beta$, $\beta >0$. Then $\mathcal{T}$ is $%
S_\alpha$-massive if and only if $\beta \leq 1/(d-\alpha -1)$.

\begin{proof}
Assume that the series (\ref{szereg1}) is convergent. Show that the set $%
\mathcal{T}$ is non-massive. For any compact set $A\subset \mathbb{R} ^d$
and for any $s>0$ the following scaling property holds 
\begin{align}
\widetilde{Cap_{\alpha}}(sA)= s^{d-\alpha}\widetilde{Cap_{\alpha}} (A),
\label{scaleInvariance}
\end{align}
see e.g. \textsc{Sato} \cite[Example 42.17]{Sato}. Using Proposition \ref%
{capCylinder}, the assumption (\ref{thinthorn}) and the equation (\ref%
{scaleInvariance}), for enough large $n$ we have 
\begin{align*}
\widetilde{Cap_{\alpha}} (\mathcal{F}_n^+)&=\widetilde{Cap_{\alpha}} \big( %
t(2^{n+1})\!\cdot \! \mathcal{F}_n^+/ t(2^{n+1}) \big)  \notag \\
&= t(2^{n+1})^{d-\alpha}\!\cdot \!\widetilde{Cap_{\alpha}} \big( \mathcal{F}%
_n^+/t(2^{n+1}) \big)  \notag \\
&\leq c_1 t(2^{n+1})^{d-\alpha }\!\cdot \!t(2^{n+1})^{-1}\!\cdot \! \Big(%
\frac{4}{3}2^{n+1}- \frac{3}{4}2^n\Big)  \notag \\
&\leq c_2 t(2^{n+1})^{d-\alpha -1}\!\cdot \!2^{n+1},
\end{align*}
for some $c_1, c_2>0$. Let $\mathcal{T}_n$ be as in (\ref{thornPart}). Since 
$\mathcal{T}_n \subset F_n^+$, see Figure 2, 
\begin{align*}
Cap_\alpha (\mathcal{T}_n)\leq Cap_\alpha (F_n^+).
\end{align*}
By Theorem \ref{CapContin}, Section 5, 
\begin{align}
c_3 \widetilde{Cap_\alpha}(\widetilde{F}_n^+)\leq Cap_\alpha (F_n^+)\leq c_4 
\widetilde{Cap_\alpha}(\widetilde{F}_n^+),  \label{compCyl1}
\end{align}
for some $c_3, c_4>0$. Using again Proposition \ref{capCylinder} we obtain 
\begin{align}
c_5 \widetilde{Cap_\alpha} (\mathcal{F}_n^+)\leq \widetilde{Cap_\alpha} (%
\widetilde{F}_n^+)\leq c_6\widetilde{Cap_\alpha} (\mathcal{F}_n^+),
\label{compCyl2}
\end{align}
for some $c_5,c_6>0$. All the above show that 
\begin{align*}
\sum _{n>0} \frac{Cap_\alpha (\mathcal{T}_n)}{2^{n(d-\alpha)}}&\leq c_7\sum
_{n>0}\Big(\frac{t(2^{n+1})}{2^{n+1}}\Big)^{d-\alpha -1}<\infty ,
\end{align*}
as desired.

Conversely, assume that the series (\ref{szereg1}) is divergent. Show that
the set $\mathcal{T}$ is massive. Applying Proposition \ref{capCylinder},
the assumption (\ref{thinthorn}) and the equation (\ref{scaleInvariance}) we
have 
\begin{align*}
\widetilde{Cap_{\alpha}} (\mathcal{F}_n^-)&=\widetilde{Cap_{\alpha}} \big( %
t(2^{n})\!\cdot \! \mathcal{F}_n^-/ t(2^{n}) \big)  \notag \\
&= t(2^{n})^{d-\alpha}\!\cdot \!\widetilde{Cap_{\alpha}} \big( \mathcal{F}%
_n^-/t(2^{n}) \big)  \notag \\
&\geq c^{\prime}_1 t(2^{n})^{d-\alpha }\!\cdot \!t(2^{n})^{-1}\!\cdot \! %
\Big(\frac{3}{4}2^{n+1}- \frac{4}{3}2^n\Big)  \notag \\
&\geq c^{\prime}_2 t(2^{n})^{d-\alpha -1}\!\cdot \!2^{n},
\end{align*}
for some $c^{\prime}_1, c^{\prime}_2>0$. Since $F_n^-\subset \mathcal{T}_n$,
see Figure 2, 
\begin{align*}
Cap_\alpha (\mathcal{T}_n)\geq Cap_\alpha (F_n^-).
\end{align*}
Similarly to (\ref{compCyl1}) and (\ref{compCyl2}) we get 
\begin{align*}
c^{\prime}_3 \widetilde{Cap_\alpha}(\widetilde{F}_n^-)\leq Cap_\alpha
(F_n^-)\leq c^{\prime}_4 \widetilde{Cap_\alpha}(\widetilde{F}_n^-)
\end{align*}
and 
\begin{align*}
c^{\prime}_5 \widetilde{Cap_\alpha} (\mathcal{F}_n^-)\leq \widetilde{%
Cap_\alpha} (\widetilde{F}_n^-)\leq c^{\prime}_6\widetilde{Cap_\alpha} (%
\mathcal{F}_n^-),
\end{align*}
for some constants $c^{\prime}_3,c^{\prime}_4,c^{\prime}_5,c^{\prime}_6>0$.
Thus, at last, 
\begin{align*}
\sum _{n>0} \frac{Cap_\alpha (\mathcal{T}_n)}{2^{n(d-\alpha)}}&\geq
c^{\prime}_7\sum _{n>0}\Big(\frac{t(2^{n})}{2^{n}}\Big)^{d-\alpha -1}=\infty
,
\end{align*}
as desired.
\end{proof}

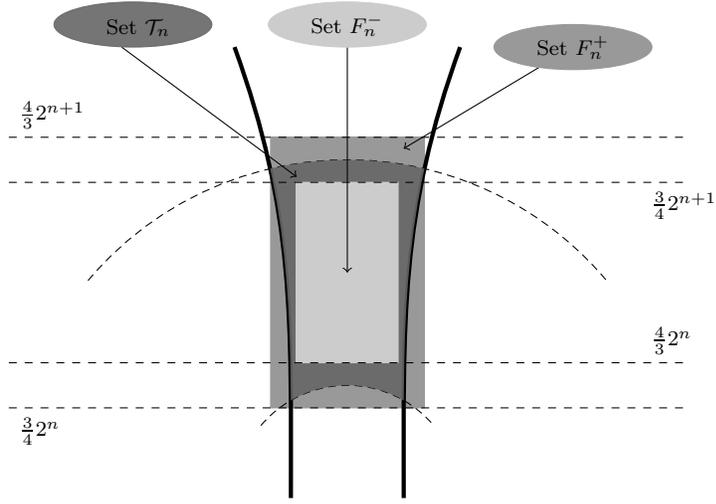
\begin{figure}[h]
\begin{tikzpicture}[scale=1.5]
									\draw [fill=myGrey3, myGrey3](-0.68,0.8) rectangle (0.68,3.2);		
									\draw [densely dashed](0,0) ++(140:1) arc (140:40:1);	
										\draw [densely dashed](0,0) ++(140:3) arc (140:40:3);	
									\draw[ultra thick] (0.5,0) to [out=90, in=250] (1,4);				
									\draw[ultra thick] (-0.5,0) to [out=90, in=290] (-1,4);			
									\begin{scope}													
										\clip (-0.5,0) to [out=90, in=290] (-1,4)-- (1,4) to [out=250, in=90] 																			(0.5,0)--(-0.5,0);
										\clip (0:1cm) -- (0:3cm)
										arc (0:180:3cm) -- (180:1cm)
										arc (180:0:1cm) -- cycle;
										\fill[color=myGrey2, opacity=0.9] (-3,0) rectangle (3,5); 
									\end{scope}	
									\draw [fill=myGrey4, myGrey4](-0.45,1.2) rectangle (0.45,2.8);			
									\draw (-3,0.6) node [right] {{\tiny $\frac{3}{4}2^n$}};
										\draw (-3,3.4) node [right] {{\tiny $\frac{4}{3}2^{n+1}$}};
										\draw [dashed](-3,0.8)--(3,0.8);
									\draw [dashed] (-3,3.2)--(3,3.2);
									\draw [dashed] (-3,1.2)--(3,1.2);
									\draw [dashed] (-3,2.8)--(3,2.8);
									\draw (2.6,2.6) node [right] {{\tiny $\frac{3}{4}2^{n+1}$}};
										\draw (2.6,1.4) node [right] {{\tiny $\frac{4}{3}2^{n}$}};
										\draw[<-] (0.5,3.1)--(2,4);
										\draw [fill=myGrey3, myGrey3] (2,4) ellipse (0.7 and 0.2);
										\draw (2,3.8) node [above] {{\tiny Set $F_n^+$}}; 
									\draw[<-]  (-0.45, 2.85)--(-2,4);
								    \draw [fill=myGrey2, opacity=0.9, myGrey2] (-1.9,4.2) ellipse  (0.7 and 0.2); 														\draw (-1.85,4) node [above] {{\tiny Set\ $\mathcal{T}_n$}};																		
									\draw[<-]  (0, 2)--(0,4);
								    \draw [fill=myGrey4, myGrey4] (0,4.2) ellipse  (0.7 and 0.2); 																		\draw (0,4) node [above] {{\tiny Set\ $F_n^-$}};		
							\end{tikzpicture}
\caption{Two cylinders inscribed in and circumscribed around the thorn.}
\end{figure}

\newpage
\section{Two comparisons.}

Let $\psi $ be a special Bernstein function (see Remark \ref{psi_thorn}). Let $B_\psi $ be a Lévy process in $\mathbb{R} ^d$ obtained by subordination
of the Brownian motion $B$. Let $S_\psi$ be the random walk obtained by
subordination of the simple random walk $S$. Let $\widetilde{G_\psi} (x)$
(resp. $G_\psi$) be the Green function of $B_\psi$ (resp. $S_\psi$). 
\par In what follows we assume
that $\psi \in \mathcal{BF}$ satisfies the conditions of Theorem \ref{Asymptotic}. 

\begin{prop}
\label{compareGreen} The function $\widetilde{G_\psi} (x)$ has the following
asymptotic 
\begin{gather*}
\widetilde{G_\psi} (x)\sim \frac{A_{d,\alpha}}{\Vert x\Vert
^{d}\psi (1/\Vert x\Vert ^2)} ,\quad x\to \infty ,
\end{gather*}
where 
\begin{align*}
A_{d,\alpha}=\frac{\Gamma ((d-\alpha )/2)}{2^\alpha \!\cdot \!
\pi ^{d/2}\!\cdot \! \Gamma (\alpha /2)}.
\end{align*}
In particular, 
\begin{align*}
\widetilde{G_\psi }(x)\sim (2/d)^{\alpha /2} G_\psi (x),\quad  x\to \infty .
\end{align*}
\end{prop}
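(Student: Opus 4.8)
The plan is to push the subordination through directly at the level of Green functions. Let $p_s(x)=(4\pi s)^{-d/2}\exp(-\|x\|^2/(4s))$ be the transition density of $B$, normalised so that $\psi_\alpha$-subordination produces the rotationally invariant $\alpha$-stable process (so that the Riesz-kernel constant $A_{d,\alpha}$ is what appears), and let $\mu_t$ be the law of the subordinator $\eta_t$ attached to $\psi$, i.e. $\mathcal{L}\mu_t(\lambda)=e^{-t\psi(\lambda)}$. By Tonelli,
\[
\widetilde{G_\psi}(x)=\int_0^\infty\Big(\int_{[0,\infty)}p_s(x)\,\mu_t(\ud s)\Big)\ud t=\int_{[0,\infty)}p_s(x)\,U(\ud s),\qquad U:=\int_0^\infty\mu_t\,\ud t .
\]
Since $\psi\in\mathcal{SBF}$, one has $\mathcal{L}U(\lambda)=1/\psi(\lambda)=b+\int_0^\infty e^{-\lambda t}u(t)\,\ud t$ with $u$ non-increasing and integrable near $0$ (see \cite[Theorem 11.3]{BernsFunc}), so $U=b\,\delta_0+u(t)\,\ud t$; the atom contributes nothing for $x\neq 0$ because $p_0=\delta_0$. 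Hence $\widetilde{G_\psi}(x)=\int_0^\infty p_s(x)\,u(s)\,\ud s$, and, exactly as in the proof of Lemma \ref{Lemma111}(ii), Karamata's Tauberian Theorem together with the Monotone Density Theorem give $u(s)\sim\Gamma(\alpha/2)^{-1}s^{\alpha/2-1}l(s)$ as $s\to\infty$, cf. \eqref{asympu}. (Finiteness of the integral, i.e. transience of $B_\psi$, follows from $\alpha<d$.)

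Next I would evaluate $\int_0^\infty p_s(x)u(s)\,\ud s$ as $\|x\|\to\infty$ via the scaling $s=\|x\|^2 r$:
\[
\widetilde{G_\psi}(x)=\|x\|^{-d}\int_0^\infty(4\pi r)^{-d/2}e^{-1/(4r)}\,u(\|x\|^2 r)\,\|x\|^2\,\ud r .
\]
Replacing $u(\|x\|^2 r)$ by $\Gamma(\alpha/2)^{-1}(\|x\|^2 r)^{\alpha/2-1}l(\|x\|^2)$ — legitimate in the limit by slow variation of $l$ — gives
\[
\widetilde{G_\psi}(x)\sim\frac{\|x\|^{\alpha-d}l(\|x\|^2)}{\Gamma(\alpha/2)}\int_0^\infty(4\pi r)^{-d/2}e^{-1/(4r)}r^{\alpha/2-1}\,\ud r ,
\]
and the substitution $v=1/(4r)$ turns the last integral into a Gamma integral equal to $2^{-\alpha}\pi^{-d/2}\Gamma((d-\alpha)/2)$. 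Thus $\widetilde{G_\psi}(x)\sim A_{d,\alpha}\|x\|^{\alpha-d}l(\|x\|^2)$, and since $\psi(1/\|x\|^2)=\|x\|^{-\alpha}/l(\|x\|^2)$ by \eqref{rvcbf1}, this is precisely $\widetilde{G_\psi}(x)\sim A_{d,\alpha}/(\|x\|^d\psi(1/\|x\|^2))$. The ``in particular'' then drops out by dividing through Theorem \ref{Asymptotic}: $\widetilde{G_\psi}(x)/G_\psi(x)\to A_{d,\alpha}/C_{d,\alpha}$, and a routine cancellation of the common factor $\pi^{-d/2}\Gamma((d-\alpha)/2)/\Gamma(\alpha/2)$ leaves the constant $(2/d)^{\alpha/2}$.

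The one point that genuinely needs care is turning the heuristic $\sim$ above into a rigorous limit. Near $r=0$ there is nothing to do: $p_s(x)$ is increasing in $s$ for $s<\|x\|^2/(2d)$, so $\int_0^1 p_s(x)u(s)\,\ud s\le p_1(x)\int_0^1 u(s)\,\ud s=O(e^{-\|x\|^2/4})$, negligible against $\|x\|^{\alpha-d}l(\|x\|^2)$. For the tail I would invoke Potter's bounds \cite[Theorem 1.5.6]{Bingham}: given $\varepsilon>0$ with $\alpha+2\varepsilon<d$, for $\|x\|$ large $u(\|x\|^2 r)\le c\,(\|x\|^2 r)^{\alpha/2-1}l(\|x\|^2)\max\{r^{\varepsilon},r^{-\varepsilon}\}$, so the integrand is dominated, uniformly in $\|x\|$, by an integrable function of $r$; dominated convergence then closes the argument. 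Alternatively, and more in the spirit of the paper, one can split $\int_0^\infty=\int_0^{\|x\|^2/A}+\int_{\|x\|^2/A}^\infty$, let $\|x\|\to\infty$ and then $A\to\infty$, reusing \cite[Propositions 1.5.10, 4.1.2]{Bingham} verbatim as in the proof of Theorem \ref{Asymptotic}. I expect this uniform control of the slowly varying factor on the tail to be the main (and essentially the only) obstacle; the rest is the Gamma-integral bookkeeping indicated above.
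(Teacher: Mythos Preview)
Your argument is correct and follows essentially the same route as the paper: both write $\widetilde{G_\psi}(x)=\int_0^\infty p_s(x)\,u(s)\,\ud s$ via the $\mathcal{SBF}$ structure, obtain $u(s)\sim\Gamma(\alpha/2)^{-1}s^{\alpha/2-1}l(s)$ from Karamata plus the Monotone Density Theorem, rescale $s$ by $\Vert x\Vert^2$, pass to the limit by regular variation, and reduce to the same Gamma integral. The only difference is that you spell out the dominated-convergence justification (small-$s$ Gaussian bound and Potter's bounds on the tail), which the paper leaves implicit.
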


\begin{proof}
As $\psi$ is a special Bernstein function, the potential
measure $U$ associated with the corresponding (continuous time) subordinator has a monotone
density $u(t)$, see e.g. \cite[Chapter V, Theorem 5.1]{Bogdan}.
Since $\mathcal{L}(U)(\lambda)=1/\psi (\lambda)$, the Karamata theorem
implies that the density function $u(t)$ satisfies 
\begin{gather*}
u(t)\sim \frac{ 1}{\Gamma (\alpha /2)}t^{\alpha /2-1}l(t)\quad \text{at}\
\infty .
\end{gather*}
Recall that, by definition, 
\begin{align*}
\widetilde{G_\psi} (x)&=\int _0^\infty (4\pi t)^{-d/2}\exp \left\{{-\frac{%
\Vert x\Vert ^2}{4t}}\right\}\, u(t)\, \mathrm{d} t,
\end{align*}
whence, as $\Vert x \Vert \rightarrow \infty$ 
\begin{align*}
\widetilde{G_\psi} (x)&=4^{-1}\pi ^{-d/2}\Vert x\Vert ^{2-d} \int _0^\infty
s^{d/2-2}e^{-s}u\Big( \frac{\Vert x\Vert ^2}{4s} \Big)\mathrm{d} s \\
&\sim 4^{-1}\pi ^{-d/2}\Vert x\Vert ^{2-d} \int _0^\infty
s^{d/2-2}e^{-s}u(\Vert x\Vert ^2)\Big( \frac{1}{4s} \Big)^{\alpha /2-1}%
\mathrm{d} s \\
&=2^{-\alpha}\pi ^{-d/2}\Vert x\Vert ^{2-d} u(\Vert x\Vert ^2) \int
_0^\infty s^{d/2-\alpha /2-1}e^{-s}\mathrm{d} s \\
&\sim \frac{\Gamma (\frac{d-\alpha}{2})}{2^{\alpha}\! \cdot \! \cdot \! \pi
^{d/2}\! \cdot \! \Gamma (\alpha /2)}\Vert x\Vert ^{\alpha -d}l(\Vert x\Vert
^2) .
\end{align*}
Combining this result with that of Theorem \ref{Asymptotic} we obtain the
claimed comparison of Green functions $\widetilde{G_\psi}$ and $G_\psi$.
\end{proof}


Let $\widetilde{Cap_\psi }(A)$ be the capacity of a set $A\subset \mathbb{R}
^d$ associated with the process $B_\psi$. Recall that by definition (see
e.g. \cite{Blumental}) 
\begin{align*}
\widetilde{Cap_\psi }(A)=\sup \, \{\mu (A):\, \mu \in \mathcal{K}_A\},
\end{align*}
where $\mathcal{K}_A$ is the class of measures supported by $A$ and such
that 
\begin{align*}
\widetilde{G_\psi} \mu (\xi)=\int _A \widetilde{G_\psi} ( \xi - \eta ) \mu (%
\mathrm{d} \eta)\leq 1,\quad \text{for all}\ \xi \in \mathbb{R} ^d .
\end{align*}
Let $Cap_\psi (B)$ be the capacity of a set $B\subset \mathbb{Z} ^d$
associated with the process $S_\psi$. Similarly 
\begin{gather*}
Cap_\psi (B)=\sup\, \{\, \sum _{y\in B}\phi (y):\, \phi \in \Xi _B \} ,
\end{gather*}
where 
\begin{align*}
\Xi _B=\{\phi \geq 0 : \text{supp}\, \phi \subset B\ \text{and}\ G_\psi \phi
\leq 1\}.
\end{align*}

\begin{thm}
\label{CapContin} Let $B$ be a bounded subset of $\mathbb{Z} ^d$. Let $\widetilde{B}$ be defined at (\ref{tildeSet}). There exist
constants $c_1,\, c_2>0$, which depend only on $d$ and $\psi$, and such that 
\begin{gather*}
c_1\widetilde{Cap_\psi} (\widetilde{B})\leq Cap_\psi (B)\leq c_2\widetilde{%
Cap_\psi} (\widetilde{B}).
\end{gather*}
\end{thm}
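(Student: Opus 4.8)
The plan is to compare the discrete capacity $Cap_\psi(B)$ with the continuous capacity $\widetilde{Cap_\psi}(\widetilde B)$ by transferring measures back and forth, using the Green function comparison from Proposition \ref{compareGreen} together with the elementary fact that $\widetilde{G_\psi}$ is, up to a bounded multiplicative error, "almost constant" over a unit cube when the two cubes are far apart, and bounded near the diagonal. First I would recall that both capacities are defined as suprema of total masses of measures whose $G_\psi$-potential (resp. $\widetilde{G_\psi}$-potential) is $\le 1$. Since $B$ is finite, the suprema are attained at the equilibrium measures $\phi_B$ on $B$ and $\mu_{\widetilde B}$ on $\widetilde B$.

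For the lower bound $c_1\widetilde{Cap_\psi}(\widetilde B)\le Cap_\psi(B)$, I would take the equilibrium measure $\mu_{\widetilde B}$ of $\widetilde B$ in $\mathbb{R}^d$ and push it forward to $\mathbb{Z}^d$ by assigning to each lattice point $b\in B$ the mass $\mu_{\widetilde B}(Q(b))$, producing a measure $\phi$ on $B$ with $\phi(B)=\mu_{\widetilde B}(\widetilde B)=\widetilde{Cap_\psi}(\widetilde B)$. One must then check $G_\psi\phi(x)\le K$ uniformly in $x\in\mathbb{Z}^d$ for some constant $K$ depending only on $d,\psi$; this follows from comparing $G_\psi(x-b)$ with $\int_{Q(b)}\widetilde{G_\psi}(\xi-\eta)\,\mathrm{d}\eta$ for $\xi$ in the cube $Q(x)$: for $\|x-b\|$ large the two are comparable by the asymptotic $\widetilde G_\psi\asymp G_\psi$ of Proposition \ref{compareGreen} together with regular variation (the function $\|\cdot\|^d\psi(1/\|\cdot\|^2)$ varies slowly enough that its values over adjacent cubes are uniformly comparable), and for $\|x-b\|$ bounded there are only finitely many terms, each controlled since $\widetilde G_\psi$ is locally integrable and $G_\psi$ is finite off the origin; handling the diagonal term $b=x$ separately we absorb it into $K$. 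Dividing $\phi$ by $K$ gives an admissible measure for the discrete problem, whence $Cap_\psi(B)\ge \phi(B)/K = \widetilde{Cap_\psi}(\widetilde B)/K$.

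For the upper bound $Cap_\psi(B)\le c_2\widetilde{Cap_\psi}(\widetilde B)$ I would run the reverse construction: take the discrete equilibrium measure $\phi_B$ and spread each atom $\phi_B(\{b\})$ uniformly over the cube $Q(b)$, obtaining a measure $\mu$ on $\widetilde B$ with $\mu(\widetilde B)=\phi_B(B)=Cap_\psi(B)$. One then verifies $\widetilde{G_\psi}\mu(\xi)\le K'$ for all $\xi\in\mathbb{R}^d$: writing $\xi\in Q(x_0)$ for some $x_0\in\mathbb{Z}^d$, the contribution of cubes $Q(b)$ with $\|b-x_0\|$ large is comparable to $\sum_b \phi_B(\{b\})G_\psi(x_0-b)=G_\psi\phi_B(x_0)\le 1$ by the same cube-to-lattice comparison, while the finitely many nearby cubes contribute a bounded amount because $\int_{Q(b)}\widetilde{G_\psi}(\xi-\eta)\,\mathrm{d}\eta$ is bounded uniformly (local integrability of $\widetilde G_\psi$, which holds since $d-\alpha<d$). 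Then $\mu/K'$ is admissible for the continuous problem and $\widetilde{Cap_\psi}(\widetilde B)\ge \mu(\widetilde B)/K' = Cap_\psi(B)/K'$.

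The main obstacle is the uniform comparison of $\widetilde{G_\psi}$ on a unit cube with the value of $G_\psi$ at the corresponding lattice point, uniformly over \emph{all} pairs of cubes including those at bounded distance and, most delicately, the self-cube $b=x$: near the diagonal neither Proposition \ref{compareGreen} nor the regular-variation estimates apply directly, so one needs a separate argument that $\widetilde{G_\psi}$ is integrable over a unit ball and that the "bad" short-range part contributes only an additive constant that can be folded into the final constants. Once this local control is in place, the long-range regime is routine: the regular variation of $1/(\|x\|^d\psi(1/\|x\|^2))$ guarantees that replacing a point by a neighbouring point, or replacing a cube integral by a point evaluation, changes the Green function only by a factor bounded by a constant depending on $d$ and $\psi$, and summing these comparisons over $B$ yields the two inequalities.
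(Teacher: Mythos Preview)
Your proposal is correct and follows essentially the same route as the paper: transfer the equilibrium measure in each direction (spread atoms over cubes, or collapse cubes to their centers) and verify admissibility via a uniform comparison between $G_\psi(a-b)$ and $\widetilde{G_\psi}(\xi-\eta)$ for $\xi\in Q(a)$, $\eta\in Q(b)$. The only difference is cosmetic: instead of your near/far split, the paper packages the comparison once and for all as the two inequalities $\int_{Q(b)}\widetilde{G_\psi}(\xi-\eta)\,\mathrm{d}\eta\le c_2\,G_\psi(a-b)$ and $c_1\,G_\psi(a-b)\le \widetilde{G_\psi}(\xi-\eta)$, justified by Proposition~\ref{compareGreen} together with radial monotonicity of $\widetilde{G_\psi}$; the pointwise form of the second inequality lets one integrate directly against the (possibly singular) equilibrium measure $\widetilde{E}$, which sidesteps the need to bound $\mu_{\widetilde B}(Q(b))$ separately in the near regime.
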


\begin{proof}
Take $a,b \in B$. Let $Q(a)$ be the cube $[0,1]^d$
centered at $a\in B$. Let $\mathrm{d} \eta$ be the Lebesgue measure in $%
\mathbb{R} ^d$. By Proposition \ref{compareGreen} and radial monotonicity of $\widetilde{G_\psi}$, we can find a constant $%
c_2>0$ which does not depend on $a$ and $b$, and such that for $\xi \in Q(a)$
and $\eta \in Q(b)$, 
\begin{gather}  \label{eq8}
\int _{Q(b)}\widetilde{G_\psi} ( \xi -\eta ) \mathrm{d} \eta \leq c_2 G_\psi
(a-b).
\end{gather}
Let $E$ be the equilibrium distribution of $B$ associated with the random
walk $S_\psi$. We define a new measure 
\begin{gather*}
\mathrm{d} \nu ( \eta)=\sum _{b\in B}E(b)\mathbf{1}_{Q(b)}(\eta)\mathrm{d}
\eta .
\end{gather*}
Using (\ref{eq8}) we compute the potential $\widetilde{G_\psi}\nu$ 
\begin{align*}
\int _{\widetilde{B}} \widetilde{G_\psi} ( \xi -\eta )\, \mathrm{d} \nu (
\eta)&=\sum _{b\in B}\int _{Q(b)}\widetilde{G_\psi} (\xi -\eta ) E(b)\, 
\mathrm{d} \eta \\
&\leq c_2\sum _{b\in B}G_\psi(a-b)E(b)=c_2G_\psi E(a)\leq c_2.
\end{align*}
Thus, the measure $c_2^{-1}\nu $ belongs to the class $\mathcal{K}_{%
\widetilde{B}}$, therefore 
\begin{gather}  \label{eq9}
\widetilde{Cap_\psi} (\widetilde{B})\geq \frac{1}{c_2}\nu (\widetilde{B}).
\end{gather}
On the other hand 
\begin{gather}  \label{eq10}
\nu (\widetilde{B})=\int _{\widetilde{B}}\mathrm{d} \nu ( \eta)=\sum _{b\in
B}\int _{Q(b)}E(b)\, \mathrm{d} \eta=\sum _{b\in B}E(b)=Cap_\psi (B).
\end{gather}
Combining (\ref{eq9}) and (\ref{eq10}) we obtain 
\begin{gather*}
Cap_\psi (B)=\nu (\widetilde{B})\leq c_2\, \widetilde{Cap_\psi} (\widetilde{B%
}).
\end{gather*}
\par For the converse we use again Proposition \ref%
{compareGreen} and radial monotonicity of $\widetilde{G_\psi}$. Let $a,b \in B$. Choose $c_1>0$, which does not depend on $a$ and $b$,
such that for $\xi \in Q(a)$, $\eta \in Q(b)$, 
\begin{gather}  \label{eq11}
c_1\, G_\psi (a-b)\leq \widetilde{G_\psi} (\xi -\eta ).
\end{gather}
Let $\widetilde{E}$ be the equilibrium measure of $\widetilde{B}$, i.e. $%
\widetilde{Cap_\psi} (\widetilde{B})=\widetilde{E}(\widetilde{B})$. Define a
distribution $\varrho$ supported by the set $B$ as 
\begin{gather*}
\varrho (b)=\widetilde{E}(Q(b)),\quad b\in B.
\end{gather*}
Let 
\begin{gather*}
p=c_1G_\psi \varrho .
\end{gather*}
Using (\ref{eq11}) we get 
\begin{gather*}
p(a) \leq \sum _{b\in B}\widetilde{G_\psi }( \xi -\eta ) \varrho (b)\leq
\int _{\widetilde{B}} \widetilde{G_\psi} ( \xi -\eta ) \, \mathrm{d} 
\widetilde{E}( \eta)\leq 1.
\end{gather*}
It follows that $c_1 \varrho \in \Xi _B$, whence 
\begin{gather}  \label{eq12}
Cap_\psi (B)\geq c_1\, \varrho (B).
\end{gather}
Computing $\varrho (B)$ we obtain 
\begin{gather}  \label{eq13}
\varrho (B)=\sum _{b\in B}\varrho (b)=\sum _{b\in B}\int _{Q(b)}\mathrm{d} 
\widetilde{E} (\eta )=\int _{\widetilde{B}}\mathrm{d} \widetilde{E}( \eta )=%
\widetilde{E}(\widetilde{B}).
\end{gather}
From (\ref{eq12}) and (\ref{eq13}) we deduce that 
\begin{gather*}
Cap_\psi (B)\geq c_1\, \varrho (B)=c_1\,\widetilde{E}(\widetilde{B} )=c_1\, 
\widetilde{Cap_\psi} (\widetilde{B}).
\end{gather*}
The proof is finished.
\end{proof}

\begin{cor}
\label{CorBalls} Let $B(0,r)\subset \mathbb{Z} ^d$ be a ball of radius $r>0$
centered at $0$. The following inequalities hold 
\begin{align*}
cr^{d}\psi (1/r^2)\leq Cap_\psi (B(0,r)) \leq Cr^{d}\psi (1/r^2),
\end{align*}
for some constants $c,C>0$ and all $r>0$. In particular, 
\begin{align}
cr^{d-\alpha}\leq Cap_\alpha (B(0,r))\leq Cr^{d-\alpha}.
\label{alphaCapBall}
\end{align}
\end{cor}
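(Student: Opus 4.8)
The plan is to deduce the corollary directly from Theorem~\ref{CapContin} together with Proposition~\ref{capCylinder}'s analogue for balls, i.e. the known estimates for the capacity of a Euclidean ball associated with the subordinated L\'evy process $B_\psi$. First I would invoke Theorem~\ref{CapContin} with $B=B(0,r)\subset\mathbb{Z}^d$; since $\widetilde{B}$, the union of unit cubes centered at the lattice points of $B(0,r)$, is sandwiched between two Euclidean balls $B_{\mathbb{R}^d}(0,r-\sqrt d)$ and $B_{\mathbb{R}^d}(0,r+\sqrt d)$, monotonicity of capacity gives
\begin{align*}
\widetilde{Cap_\psi}\big(B_{\mathbb{R}^d}(0,r-\sqrt d)\big)\leq \widetilde{Cap_\psi}(\widetilde B)\leq \widetilde{Cap_\psi}\big(B_{\mathbb{R}^d}(0,r+\sqrt d)\big),
\end{align*}
so it suffices to control $\widetilde{Cap_\psi}$ of a Euclidean ball of radius $\asymp r$.

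Next I would establish the two-sided bound $\widetilde{Cap_\psi}(B_{\mathbb{R}^d}(0,\rho))\asymp \rho^{d}\psi(1/\rho^2)$ for $\rho\geq 1$. The upper bound comes from testing against the restriction of Lebesgue measure (or the equilibrium measure of a slightly larger ball) and using the Green-function asymptotic $\widetilde{G_\psi}(x)\asymp \|x\|^{\alpha-d}l(\|x\|^2)$ from Proposition~\ref{compareGreen}, integrating $\widetilde{G_\psi}$ over the ball; the lower bound follows by exhibiting an admissible measure, e.g. a suitable multiple of normalized Lebesgue measure on $B_{\mathbb{R}^d}(0,\rho)$, whose potential is bounded by a constant, which again reduces to the same integral estimate $\int_{B(0,\rho)}\|y\|^{\alpha-d}l(\|y\|^2)\,\mathrm{d}y\asymp \rho^{\alpha}l(\rho^2)$ (this is Proposition~1.5.8 or 1.5.10 of \cite{Bingham}). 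Feeding $\rho^{\alpha}l(\rho^2)=\rho^{\alpha}\cdot\rho^{2}\psi(1/\rho^2)/\rho^{2}$, i.e. using $\psi(1/\rho^2)\sim \rho^{-\alpha}l(\rho^2)$ from (\ref{rvcbf1}), rewrites this as $\rho^{d}\psi(1/\rho^2)$ up to constants.

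Then I would combine the pieces: Theorem~\ref{CapContin} gives $Cap_\psi(B(0,r))\asymp \widetilde{Cap_\psi}(\widetilde B)$, the sandwich gives $\widetilde{Cap_\psi}(\widetilde B)\asymp \widetilde{Cap_\psi}(B_{\mathbb{R}^d}(0,r))$ for $r$ large (absorbing the $\pm\sqrt d$ into the doubling-type behaviour of $\rho\mapsto \rho^d\psi(1/\rho^2)$, which is regularly varying of index $d-\alpha>0$), and the capacity estimate gives $\widetilde{Cap_\psi}(B_{\mathbb{R}^d}(0,r))\asymp r^{d}\psi(1/r^2)$. For small $r$ the set $B(0,r)$ is finite and the estimate is trivial after adjusting constants. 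The specialization $\psi=\psi_\alpha$, for which $\psi_\alpha(1/r^2)=r^{-\alpha}$ exactly, yields (\ref{alphaCapBall}) immediately. The main obstacle is the Euclidean ball capacity estimate: one must be slightly careful that the test measure used for the lower bound actually has potential $\le 1$ uniformly (not just $\le C$), which is handled by rescaling the measure by the constant $C$, and that the integral asymptotics from \cite{Bingham} apply with the slowly varying $l$; all of this is routine regular-variation bookkeeping once the reduction to a single ball in $\mathbb{R}^d$ is in place.
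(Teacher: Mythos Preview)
Your reduction via Theorem~\ref{CapContin} and the sandwich between Euclidean balls is exactly how the paper begins. The only divergence is in how the Euclidean ball capacity $\widetilde{Cap_\psi}(B_{\mathbb{R}^d}(0,\rho))\asymp \rho^d\psi(1/\rho^2)$ is obtained: the paper does not compute it but quotes it. For $d\geq 3$ it cites a result of Grzywny (Proposition~3 in \cite{Grzywny}), which gives the bound directly from the L\'evy exponent $\phi(\xi)=\psi(\|\xi\|^2)$; for $d\leq 2$ it chains together $\widetilde{Cap_\psi}(B(0,r))\asymp r^d/\int_{B(0,r)}\widetilde{G_\psi}$ from \cite{Bogdan}, the exit-time formula of Pruitt \cite{Pruitt}, and again \cite{Grzywny}. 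Your plan---derive the ball capacity directly from the Green-function asymptotic of Proposition~\ref{compareGreen} via $\int_{B(0,\rho)}\widetilde{G_\psi}\asymp \rho^\alpha l(\rho^2)$---is in effect the paper's $d\leq 2$ route carried out uniformly, and is more self-contained. Two small points: first, from (\ref{rvcbf1}) one has $\psi(1/\rho^2)=\rho^{-\alpha}/l(\rho^2)$, not $\rho^{-\alpha}l(\rho^2)$ as you wrote (your final expression $\rho^d\psi(1/\rho^2)$ is still correct); second, your phrase ``testing against Lebesgue measure'' for the \emph{upper} bound should be read as producing a measure $\nu$ with $\widetilde{G_\psi}\nu\geq 1$ on the ball and using duality, which needs the infimum of $\int_{B(0,\rho)}\widetilde{G_\psi}(x-y)\,\mathrm{d}y$ over $x\in B(0,\rho)$---this is $\asymp \rho^\alpha l(\rho^2)$ as well, but you should say so explicitly.
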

\begin{proof}
Assume $d\geq 3$. Let $\phi $ be the L\'{e}vy exponent of $B_\psi$, that is
\begin{align*}
\gE e^{i\xi B_\psi (t)}=e^{-t\phi (\xi)},\quad \xi \in \gR ^d.
\end{align*} 
Since $B_\psi$ is a subordinated Brownian motion, we have
\begin{align*}
\phi (\xi)=\psi (\Vert \xi \Vert ^2),\quad \xi \in \gR ^d.
\end{align*}
The function $\phi (s)$ is increasing whence
\cite[Proposition 3]{Grzywny} applies in the form
\begin{align*}
\widetilde{Cap_\psi}  (B(0,r))\asymp \psi (r^{-2})r^d.
\end{align*}
At last, Theorem \ref{CapContin} yields the desired result.
\par When $d\leq 2$ we proceed as follows. We use \cite[Proposition 5.55]{Bogdan},
\begin{align*}
\widetilde{Cap_\psi}  (B(0,r))\asymp\frac{r^d}{\int _{B(0,r)}\widetilde{G_\psi}(x)\ud x},
\end{align*}
and \cite[Proposition 5.56]{Bogdan},
\begin{align*}
\int _{B(0,r)}\widetilde{G_\psi}(x)\asymp \gE^0\tau _{B(0,r)},
\end{align*}
where $\tau _{B(0,r)} $ is  $B_\psi$--first exit time from the ball $B(0,r)$.
We use \cite[Theorem 1 and p. 954]{Pruitt},
\begin{align*}
\gE^0\tau _{B(0,r)}\asymp \frac{1}{h(r)},
\end{align*}
where 
\begin{align*}
h(r)=\int _{\gR ^d}\Big(\frac{\Vert x\Vert ^2}{r^2}\wedge 1\Big)\ud \nu (x)
\end{align*}
and $\nu$ is the L\'{e}vy measure associated with the L\'{e}vy exponent $\phi$, see \cite[Section 3]{Pruitt}.
By \cite[Corollary 1]{Grzywny},
\begin{align*}
h(r)\asymp \psi (r^{-2}).
\end{align*}
The proof is finished.
\end{proof}

\subsection*{Acknowledgements}

This paper was started at Wrocław University and finished at Bielefeld
University (SFB-701). We thank A.~Grigor'yan, W.~Hansen,
S.~Molchanov and Z.~Vondraček for fruitful discussions. We also
thank the anonymous referee for valuable remarks.%

\end{document}